\documentclass{conm-p-l}
\usepackage{mathrsfs}
\usepackage{amsmath,amssymb}

\def\bbR{\mathrm{I\!R}}
\def\bbC{{\mathchoice {\setbox0=\hbox{$\displaystyle\mathrm{C}$}
\hbox{\hbox to0pt{\kern0.4\wd0\vrule height0.9\ht0\hss}\box0}} 
{\setbox0=\hbox{$\textstyle\mathrm{C}$}\hbox{\hbox 
to0pt{\kern0.4\wd0\vrule height0.9\ht0\hss}\box0}} 
{\setbox0=\hbox{$\scriptstyle\mathrm{C}$}\hbox{\hbox 
to0pt{\kern0.4\wd0\vrule height0.9\ht0\hss}\box0}} 
{\setbox0=\hbox{$\scriptscriptstyle\mathrm{C}$}\hbox{\hbox 
to0pt{\kern0.4\wd0\vrule height0.9\ht0\hss}\box0}}}}

\def\rto{\bbR\nh^2}
\def\rtr{\bbR\nh^3}
\def\rfo{\bbR\hn^4}

\def\eu{e_1\w}
\def\ed{e_2\w}
\def\et{e_3\w}
\def\eq{e\hn_4\w}

\newcommand{\sca}{\mathrm{s}}
\newcommand{\w}{^{\phantom i}}
\newcommand{\nnh}{\hskip-1pt}
\usepackage{color}

\newcommand{\ric}{\mathrm{r}}

\def\cwedge{\bigcirc\kern-1.07em\wedge\ }
\newcommand{\hyp}{\hskip.5pt\vbox
{\hbox{\vrule width2.5ptheight0.5ptdepth0pt}\vskip2pt}\hskip.5pt}

\def\hs{\hskip.7pt}
\def\hh{\hskip.4pt}
\def\nh{\hskip-.7pt}
\def\hn{\hskip-.4pt}
\def\w{^{\phantom i}}
\def\tzm{{T\hskip-2.9pt_z\w\hn M}}

\def\aj{\alpha}

\def\bj{\beta}
\def\cj{\gamma}
\def\dj{\delta}
\def\ja{\lambda}
\def\ly{\lambda}
\def\jb{\mu}
\def\jc{\nu}

\def\vg{\varGamma}

\def\p{p}

\newtheorem{thm}{{\bf Theorem}}[section]

\newtheorem{prop}[thm]{{\bf Proposition}}
\newtheorem{cor}[thm]{{\bf Corollary}}

\theoremstyle{definition}

\theoremstyle{remark}
\newtheorem{rem}[thm]{{\bf Remark}}

\numberwithin{equation}{section}

\begin{document}

\title[Al\-most-prod\-uct K\"ah\-ler surfaces]{Al\-most-prod\-uct K\"ah\-ler
surfaces}
\author{Andrzej Derdzinski$^{1}$ and JeongHyeong Park$^{2}$}
\address{$^{1}$\nnh\ Department\nnh\ of\nnh\ Mathematics,\nnh\ The\nnh\
Ohio\nnh\ State\nnh\ University,\nnh\ Columbus,\nnh\ OH\nnh\ 43210,\nnh\ USA}
\email{andrzej@math.ohio-state.edu}

\address{$^{2}$ Department of Mathematics, Sungkyunkwan University, Suwon,
 16419, Korea}
\email{parkj@skku.edu}

\subjclass[2020]{53B35}
\keywords{K\"ah\-ler surface; al\-most-prod\-uct structure;
hol\-o\-mor\-phic distribution; totally geodesic foliation; exterior
differential system}

\begin{abstract}
We study the case where the tangent bundle of a K\"ah\-ler surface splits 
or\-thog\-o\-nal\-ly into two in\-te\-gra\-ble com\-plex-line 
sub\-bun\-dles. This amounts to the presence of a unit-length closed
anti-self-dual $\,2$-form, leading in turn to a curvature condition.
Therefore, such a decomposition need not exist, even locally,
in contrast with general Riemannian four-man\-i\-folds which, under the
assumption of real-an\-a\-lyt\-ic\-i\-ty, were shown by Grant and Vickers 
\cite{grant-vickers} to always admit, locally, two mutually orthogonal
in\-te\-gra\-ble rank-two distributions. 
We exhibit local coordinates naturally
adapted to a decomposition as above in a K\"ah\-ler surface, 
which may be used for simple constructions of examples. 
We also provide a characterization of the K\"ah\-ler case within 
the wider class of al\-most-K\"ah\-ler surfaces arising from the 
coordinates just mentioned. The characterization involves a system of four
first-or\-der quasi\-lin\-e\-ar partial differential equations imposed
on four unknown functions of four variables and, using Car\-tan's test, 
we prove the system's local solvability. 
Finally, we show that, for such a decomposition in a K\"ah\-ler surface, 
one of the summands is hol\-o\-mor\-phic if and only if the other one
has totally geodesic leaves, and describe a general construction of
examples with this last property.
\end{abstract}

\maketitle
\renewcommand{\thethm}{\Alph{thm}}
\renewcommand{\theprop}{\Alph{prop}}
\section{Introduction}\label{in}
\setcounter{equation}{0}
\setcounter{thm}{0}
Grant and Vickers \cite{grant-vickers} showed that every real-an\-a\-lyt\-ic 
Riemannian four-man\-i\-fold admits, 
locally, a decomposition of its tangent bundle into an orthogonal direct sum
of two in\-te\-gra\-ble real-plane sub\-bun\-dles. 
As we point out in  Proposition~\ref{crvob}, the analogous assertion 
fails to hold, in general, for 
real-an\-a\-lyt\-ic K\"ah\-ler surfaces, if one in  
addition requires the two summands to be $\,J$-in\-var\-i\-ant (with
$\,J\,$ denoting, here and below, the al\-most-com\-plex structure). 

The goal of this paper is to provide a complete local description -- see
Theorem~\ref{lcstr} below -- of 
K\"ah\-ler surfaces $\,(M\nh,g)\,$ that do admit such a decomposition:
the case where there exist $\,\mathcal{H}\,$ and $\,\mathcal{V}\hs$ such 
that 
\begin{equation}\label{cls}
\begin{array}{l}
T\nh M\hs=\,\mathcal{H}\oplus\mathcal{V}\,\mathrm{\hs\ is\hs\ the\hs\ 
}\,g\hyp\mathrm{or\-thog\-o\-nal\hs\ direct\hs\ sum\hs\ of}\\
\mathrm{two\ in\-te\-gra\-ble\ com\-plex}\hyp\mathrm{line\ sub\-bun\-dles\ 
}\hs\mathcal{H}\hh\mathrm{\ and\ }\hs\mathcal{V}\nh.
\end{array}
\end{equation}
In the following two propositions, proved in Sect.\,\ref{ps}, 
$\,M\,$ as well as the leaves of 
$\,\mathcal{H}\,$ and $\,\mathcal{V}\nh$, being al\-most-com\-plex
manifolds, are canonically oriented.
\begin{prop}\label{kheqv}On an al\-most-K\"ah\-ler surface\/ $\,(M\nh,g)\,$ 
the decompositions\/ {\rm(\ref{cls})} stand in 
a one-to-one correspondence with unit-length closed
anti-self-dual\/ $\,2$-forms. 
The correspondence associates with a given decomposition\/ {\rm(\ref{cls})}
the\/ $\,2$-form\/ $\,\zeta\,$ such that\/ $\,\mathcal{H}\hs$ and\/
$\,\mathcal{V}\hs$ are\/ $\,\zeta$-or\-thog\-o\-nal, while\/ $\,\zeta\,$
restricted to each leaf of\/ $\,\mathcal{H}\nh$, or\/ $\,\mathcal{V}\nh$,
equals\/ $\,-\nnh1/\nh\sqrt{2\hs}\,$
or, respectively, $\,1/\nh\sqrt{2\hs}\,$ times the area form of the leaf.
\end{prop}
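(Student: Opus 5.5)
We will work pointwise first and then add the integrability conditions. At each point $z$, fix the orientation of $\tzm$ induced by $J$. Let $\mathcal{H}_z\oplus\mathcal{V}_z$ be a $g$-or\-thog\-o\-nal splitting into $J$-in\-var\-i\-ant real planes. Take an orthonormal basis $e_1,e_2=Je_1$ of $\mathcal{H}_z$ and $e_3,e_4=Je_3$ of $\mathcal{V}_z$; this basis is positive. The Kähler form is then, up to sign convention, $\omega=e^1\wedge e^2+e^3\wedge e^4$, which is self-dual.

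Set
\[
\zeta=\bigl(-e^1\wedge e^2+e^3\wedge e^4\bigr)/\sqrt{2\hs}.
\]
This form is anti-self-dual, has unit length in the norm where $e^i\wedge e^j$ ($i<j$) are orthonormal, makes $\mathcal{H}_z,\mathcal{V}_z$ $\zeta$-or\-thog\-o\-nal, and restricts to the leaves as the statement requires. It does not depend on the choice of bases, since $e^1\wedge e^2$ is the area form of the oriented plane $\mathcal{H}_z$.

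Conversely, consider a unit anti-self-dual $\zeta$ at $z$. It commutes with $J$ when viewed as a skew endomorphism $Z$ via $g$, because $\Lambda^-$ and $\Lambda^+$ commute as endomorphism algebras and $J\in\Lambda^+$. Also $Z^2=-\tfrac12\,\mathrm{Id}$, the standard fact for unit (anti)-self-dual forms in dimension four under this normalization. Hence $JZ$ is symmetric and squares to $\tfrac12\,\mathrm{Id}$. Its eigenspaces for $\pm1/\sqrt2$ are orthogonal, $J$-invariant planes, since $J$ commutes with $JZ$. Checking the sign in the normal form above shows which eigenspace is $\mathcal{H}$ and which is $\mathcal{V}$. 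This gives a bijection between pointwise splittings into ordered orthogonal complex lines and unit sections of $\Lambda^-$, and smoothness passes through in both directions.

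It remains to show that $d\zeta=0$ if and only if both $\mathcal{H}$ and $\mathcal{V}$ are integrable; this is where the work lies. Write $\zeta=(\omega-2\alpha)/\sqrt2$, where $\alpha=e^1\wedge e^2$ is the area form of $\mathcal{H}$. Equivalently, $\sqrt2\,\zeta=-\omega+2\beta$ with $\beta=e^3\wedge e^4$. Since $(M,g)$ is al\-most-K\"ah\-ler, $d\omega=0$, so $d\zeta=0$ is equivalent to $d\alpha=0$, and also to $d\beta=0$.

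Now $\alpha$ is a decomposable 2-form with kernel $\mathcal{V}$. Evaluate $d\alpha$ on $(e_1,e_3,e_4)$ and $(e_2,e_3,e_4)$ using the Cartan formula: the only surviving terms are of the form $\alpha([e_3,e_4],e_i)$, so these components vanish exactly when $[e_3,e_4]\in\mathcal{V}$, that is, when $\mathcal{V}$ is integrable. The remaining components of $d\alpha$, on $(e_1,e_2,e_j)$, equal $e_j$ applied to $\alpha(e_1,e_2)=1$ minus bracket terms. In general these are not automatically zero. However, $d\omega=0$ gives $d\alpha=-d\beta$, and the $(e_1,e_2,e_j)$ components of $d\beta$ are controlled by $[e_1,e_2]$ modulo $\mathcal{H}$ in the same way. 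So $d\alpha=0$ if and only if $\mathcal{H}$ and $\mathcal{V}$ are both integrable.

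The step needing the most care is this bookkeeping, in particular verifying that the $(e_1,e_2,e_j)$ terms of $d\alpha$ are precisely minus the integrability terms of $\mathcal{H}$ coming from $d\beta$. I will do it in a local adapted orthonormal frame and use $\alpha+\beta=\omega$ closed.
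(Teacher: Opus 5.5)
Your proof is correct. It departs from the paper's mainly in the converse implication, that integrability of $\mathcal{H}$ and $\mathcal{V}$ implies $d\zeta=0$.

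Pointwise, the paper uses the normal form (\ref{bas}) of Remark~\ref{sdtwf} and recovers $\mathcal{H},\mathcal{V}$ as $\ker(\zeta^+\pm\zeta^-)$, where $\sqrt2\,\zeta^+=\omega$. These kernels are exactly your $\pm1/\sqrt2$ eigenspaces of $JZ$, so this part is the same idea in other language. Your eigenprojections $(\mathrm{Id}\pm\sqrt2\,JZ)/2$ give smoothness without needing (\ref{ral}). You should add one line showing that each eigenspace is a plane: it is $J$-invariant, hence even-dimensional, and if it were all of $T_zM$ then $Z=\mp J/\sqrt2$ would be self-dual.

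The forward implication is the same in both arguments. Because $d\omega=0$, $d\zeta=0$ is equivalent to closedness of the decomposable forms $\alpha=e^1\wedge e^2$ and $\beta=e^3\wedge e^4$, and their kernels $\mathcal{V},\mathcal{H}$ are then integrable.

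For the converse, the paper appeals to Remark~\ref{krint} and writes $d(\zeta^+\pm\zeta^-)=\eta^\pm\wedge(\zeta^+\pm\zeta^-)$. It observes that $\eta=\eta^++\eta^-$ works in both equations, so $d\omega=\eta\wedge\omega$, and then kills $\eta$ by nondegeneracy of $\omega$. You instead compute the four frame components of $d\alpha$ directly:
\begin{itemize}
\item $d\alpha(e_i,e_3,e_4)=-\alpha([e_3,e_4],e_i)$ picks up the $\mathcal{H}$-part of $[e_3,e_4]$;
\item $d\alpha(e_1,e_2,e_j)=-d\beta(e_1,e_2,e_j)=\beta([e_1,e_2],e_j)$ picks up the $\mathcal{V}$-part of $[e_1,e_2]$.
\end{itemize}
So the bookkeeping you were worried about is immediate.

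Your route is more elementary and gives an explicit identity expressing $d\zeta$ through the two integrability obstructions. The paper's route is frame-free at the last step and recycles Remark~\ref{krint}. In both, $d\omega=0$ is exactly what makes the converse work.
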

In a distant analogy to the results of Tod \cite{tod}, cf.\ also
\cite{gauduchon-moroianu}, (\ref{cls}) implies a
curvature condition: a pointwise inequality relating 
the lowest eigen\-value 
of the Weyl tensor acting on anti-self-dual bi\-vec\-tors to the scalar
curvature $\,\sca$.
\begin{prop}\label{crvob}At points of a K\"ah\-ler surface\/ $\,(M\nh,g)\,$ 
where, locally, a decomposition\/ {\rm(\ref{cls})} exists, one necessarily
has
\begin{equation}\label{min}
\mathrm{min}\,\hs\mathrm{spec}\,W^-\nnh\nh\le\,\sca/6\,\mathrm{\ and,\
consequently,\ }\,|\hs W^-\nh|\ge-\hh\sca/6.
\end{equation}
\end{prop}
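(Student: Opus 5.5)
The plan is to use the unit-length closed anti-self-dual $2$-form $\,\zeta\,$ provided by Proposition~\ref{kheqv} and to extract curvature information from it by a pointwise Weitzenb\"ock identity, combined with the tracelessness of $\,W^-\nnh$. This yields (\ref{min}) when $\,\sca\ge0\,$ but not yet when $\,\sca<0$, which I flag as the main obstacle.

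\emph{Step 1.} Fix a point where (\ref{cls}) holds locally. By Proposition~\ref{kheqv} there is a unit-length closed anti-self-dual $\,2$-form $\,\zeta\,$ near it. Since $\,\zeta\,$ is anti-self-dual, $\,d\hn*\hn\zeta=-d\zeta=0$, so $\,\zeta\,$ is harmonic. The Weitzenb\"ock formula on $\,\Lambda^-$ reads $\,\Delta\zeta=\nabla^*\nabla\zeta-2W^-\zeta+(\sca/3)\zeta$. Because $\,|\zeta|\,$ is constant, $\,\langle\nabla^*\nabla\zeta,\zeta\rangle=|\nabla\zeta|^2$. Hence
\[
2\langle W^-\zeta,\zeta\rangle\,=\,|\nabla\zeta|^2+\sca/3\,\ge\,\sca/3 .
\]
This is a \emph{lower} bound on the Rayleigh quotient at $\,\zeta$, so it does not bound $\,\mathrm{min}\,\mathrm{spec}\,W^-$ directly. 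Instead I exploit $\,\mathrm{tr}\,W^-=0$. Complete $\,\zeta\,$ to a local orthonormal frame $\,\zeta,\sigma,\tau\,$ of $\,\Lambda^-$. Then
\[
\langle W^-\sigma,\sigma\rangle+\langle W^-\tau,\tau\rangle\,=\,-\langle W^-\zeta,\zeta\rangle\,=\,-\tfrac12|\nabla\zeta|^2-\sca/6 .
\]
So one of the two quotients is at most $\,-\tfrac14|\nabla\zeta|^2-\sca/12$, which gives $\,\mathrm{min}\,\mathrm{spec}\,W^-\le-\sca/12$. When $\,\sca\ge0\,$ this already implies $\,\mathrm{min}\,\mathrm{spec}\,W^-\le\sca/6$.

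\emph{Step 2 (main obstacle: $\,\sca<0$).} In this case I need the sharper inequality $\,|\nabla\zeta|^2\ge-\sca$ (or a direct estimate on $\,\sigma,\tau$), and this must come from the K\"ahler and integrability structure. I would write $\,\nabla\zeta=\aj\otimes\sigma+\bj\otimes\tau$, $\,\nabla\sigma=-\aj\otimes\zeta+\cj\otimes\tau$, $\,\nabla\tau=-\bj\otimes\zeta-\cj\otimes\sigma$. Closedness of $\,\zeta\,$ gives the algebraic constraint $\,\aj\wedge\sigma+\bj\wedge\tau=0$, and co-closedness gives its Hodge dual. Under $\,\Lambda^-\cong\mathfrak{so}(3)$, the curvature of $\,\Lambda^-$ equals $\,W^-+\sca/12$. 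Its $\,\sigma$- and $\,\tau$-diagonal components are then expressed through $\,d\aj,d\bj,d\cj\,$ and wedge products of $\,\aj,\bj,\cj$.

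As a complementary route I would use the leaves of $\,\mathcal{H}\,$ and $\,\mathcal{V}$. These are complex curves, hence minimal surfaces. The Gauss equation for them, together with $\,K(\mathcal{H})+K(\mathcal{V})$, expresses $\,\langle R\hh\omega_{\mathcal{H}},\omega_{\mathcal{V}}\rangle\,$ via the second fundamental forms, and this quantity is a combination of $\,\langle W^-\zeta,\zeta\rangle$, $\,\sca$ and squared norms. The J-invariance of $\,\zeta\,$ (here $\,\zeta^\sharp=J P/\sqrt2$, with $\,P\,$ the product structure) should identify $\,|\nabla\zeta|^2\,$ with these second-fundamental-form norms. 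The aim is to obtain $\,\langle W^-\zeta,\zeta\rangle\le\sca/6\,$ plus a nonpositive term, or the complementary estimate for $\,\sigma,\tau$.

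\emph{Step 3.} Once $\,\mathrm{min}\,\mathrm{spec}\,W^-\le\sca/6\,$ is established, the second assertion of (\ref{min}) follows algebraically. Since $\,W^-\,$ is traceless, any eigenvalue $\,\mu\,$ satisfies $\,|W^-|\ge|\mu|$. Taking $\,\mu=\mathrm{min}\,\mathrm{spec}\,W^-\le\sca/6\,$ gives $\,|W^-|\ge-\mu\ge-\sca/6$.
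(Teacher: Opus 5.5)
Your Step~2 is a genuine gap, and it cannot be closed, because for $\mathrm{s}<0$ the inequality (\ref{min}) is false. That is also the only case with content: for $\mathrm{s}\ge0$ both halves of (\ref{min}) already follow from $\mathrm{tr}\,W^-=0$.

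Your Step~1 identity $\langle W^-\zeta,\zeta\rangle=\mathrm{s}/6+\frac12|\nabla\zeta|^2$ is correct. It is the standard four-dimensional Weitzenb\"ock formula $\Delta=\nabla^*\nabla-2W+\mathrm{s}/3$ on $2$-forms. It also amounts to the familiar inequality $\mathrm{s}^*\ge\mathrm{s}$ for the almost-K\"ahler structure with K\"ahler form $\sqrt2\,\zeta$ and the reversed orientation.

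The paper's proof is your Step~1 with (\ref{wzz}) in its place. That formula has the opposite sign in front of $|\nabla\zeta|^2$, which gives $\mathrm{min}\,\mathrm{spec}\,W^-\le\langle W^-\zeta,\zeta\rangle\le\mathrm{s}/6$ in one line. The sign comes from (\ref{wei}). There $\delta$ is the divergence, as the coordinate expression shows, so the bracket equals $\Delta\zeta-\nabla^*\nabla\zeta$. On the unit $n$-sphere this is $2(n-2)\zeta$ by Bochner's formula, so the right-hand side of (\ref{wei}) comes out as $2(n-2)\zeta$ instead of $W\zeta=0$. The bracket needs a minus sign.

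Your Step~2 targets also conflict with Step~1:
\begin{itemize}
\item ``$\langle W^-\zeta,\zeta\rangle\le\mathrm{s}/6$ plus a nonpositive term'' is impossible unless $\nabla\zeta=0$;
\item $|\nabla\zeta|^2\ge-\mathrm{s}$ fails for every product of two surfaces with $\mathrm{s}<0$.
\end{itemize}

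Here is a counterexample within Remark~\ref{smple}. On $\{z=x+iy:y>0\}\times\mathbb{R}^2$ take $g=|dz|^2/y^2+|dx^3+z\,dx^4|^2/y$, with $\zeta=(dx^3\wedge dx^4-dx\wedge dy/y^2)/\sqrt2$.
\begin{itemize}
\item The metric is Hermitian for the complex coordinates $z$ and $x^3+zx^4$, since $dx^3+z\,dx^4=d(x^3+zx^4)-x^4dz$.
\item Its K\"ahler form $dx\wedge dy/y^2+dx^3\wedge dx^4$ is closed.
\item (\ref{cls}) holds with $\mathcal{H}=\mathrm{span}(\partial_x,\partial_y)$ and $\mathcal{V}=\mathrm{span}(\partial_3,\partial_4)$.
\end{itemize}
Its curvature is as follows.
\begin{itemize}
\item The $\mathcal{H}$-leaves are totally geodesic of curvature $-1$, so $K(\mathcal{H})=-1$.
\item The $\mathcal{V}$-leaves are flat minimal surfaces whose second fundamental form has $|T|^2=1$, so $K(\mathcal{V})=\frac12$ by the Gauss equation.
\item Curvature components with three entries from $\mathcal{H}$, or three from $\mathcal{V}$, vanish.
\item The map $z\mapsto(g_{pq})$ is a homothety onto $\mathrm{SL}(2,\mathbb{R})/\mathrm{SO}(2)$, so all mixed sectional curvatures equal $-\frac14$.
\end{itemize}
Hence $\mathrm{s}=-3$ and $|\nabla\zeta|^2=2$. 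On $\zeta$, $W^-$ has the eigenvalue $K(\mathcal{H})+K(\mathcal{V})-\mathrm{s}/3=\frac12=\mathrm{s}/6+\frac12|\nabla\zeta|^2$; this is your sign, not that of (\ref{wzz}). On the orthogonal complement of $\zeta$ in $\Lambda^-$, $W^-$ acts as $-\frac14$. So $\mathrm{min}\,\mathrm{spec}\,W^-=-\frac14>-\frac12=\mathrm{s}/6$. This metric even satisfies (\ref{cei}), with Ricci eigenvalues $-\frac32$ and $0$. Other choices in Remark~\ref{smple}, with $|T|^2=\frac25$ and vanishing second fundamental form of the fibre-metric map at a point, give $W^-=0$ at a point where $\mathrm{s}<0$. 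That defeats the second half of (\ref{min}) as well.

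What your Weitzenb\"ock argument actually proves is $\langle W^-\zeta,\zeta\rangle\ge\mathrm{s}/6$, hence $\mathrm{max}\,\mathrm{spec}\,W^-\ge\mathrm{s}/6$. This obstructs (\ref{cls}) on self-dual K\"ahler surfaces with $\mathrm{s}>0$, such as $\mathbb{C}\mathrm{P}^2$, rather than on those with $\mathrm{s}<0$ as claimed in Remark~\ref{nodec}.
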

As a result, (\ref{cls}) does not hold, even locally, 
in a self-dual K\"ah\-ler surface with negative scalar curvature,
an example of which is provided by the complex hyperbolic plane with the
standard metric. For more examples, see Remark~\ref{nodec}.

The most trivial examples of (\ref{cls}) for K\"ah\-ler surfaces $\,(M\nh,g)\,$
arise when
\begin{equation}\label{trv}
\begin{array}{l}
\mathcal{H}\hs\mathrm{\ and\ }\hs\mathcal{V}\hh\mathrm{\ are,\nh\ locally,\nh\ 
the\ factor\ distributions}\\
\mathrm{of\ a\ Riem\-ann\-i\-an}\hyp\mathrm{prod\-uct\ decomposition\ of\ }\,g.
\end{array}
\end{equation}
Due to closedness of the Ric\-ci form, (\ref{cls}) is satisfied by 
the Ric\-ci eigen\-dis\-tri\-bu\-tions $\,\mathcal{H}\,$ and
$\,\mathcal{V}\nh$, in
any K\"ah\-ler surface $\,(M\nh,g)\,$ in which
\begin{equation}\label{cei}
\mathrm{the\ Ric\-ci\ tensor\ \,Ric\,\ of\ }\,g\,\mathrm{\ has\ two\ distinct\
constant\ eigen\-val\-ues.}  
\end{equation}
Several authors studied 
consequences of (\ref{cei}) under 
additional assumptions.

Apostolov, Dr\v aghici and Moroianu
\cite[Theorem\,1.2]{apostolov-draghici-moroianu} showed that, in the 
{\it compact\/} case, (\ref{cei}) implies (\ref{trv}) unless 
the Ric\-ci eigen\-val\-ues are both negative (and, if they are, 
$\,M\,$ must be a minimal surface of general type with even positive signature
and ample canonical bundle). Apostolov {\it et al.} also pointed out
that, without the
compactness assumption, (\ref{cei}) does not imply (\ref{trv}), with
counterexamples having one Ric\-ci eigen\-val\-ue equal to zero.

Nihonyanagi,\nh\ Oguro and Sekigawa \cite{nihonyanagi-oguro-sekigawa},
assuming {\it homogeneity\/} of $\hs(M\nh,g)$,\nh\ 
proved that (\ref{cei}) leads to (\ref{trv})
unless $\,\mathrm{Ric}\,$ has a zero eigen\-val\-ue. The latter case does
occur, as shown
by Kowalski \cite[Theorem VI.3]{kowalski}, while Dr\v aghici \cite{draghici}
explained why Kowalski's example is essentially unique in this regard.

Here and below we often use local coordinates
\begin{equation}\label{rng}
\begin{array}{l}
x^1\nh,\dots,x^4\hs\mathrm{\ on\ a\ coordinate\ 
domain\ }\hs\,U\subseteq M\,\mathrm{\ with\ the\ index}\\
\mathrm{ranges\ }\,j,k,l\in\{1,2,3,4\},
\hskip6pt\ja,\jb,\jc\in\{1,2\},
\hskip6ptp,q,r\in\{3,4\}.
\end{array}
\end{equation}
Any smooth functions $\,\aj,\bj,\cj,\dj\,$ of $\,x^1\nh,\dots,x^4$ give rise 
to a metric $\,g\,$ with
\begin{equation}\label{met}
\begin{array}{l}
(g_{11}\w,\,g_{12}\w,\,g_{22}\w)
=(e^{-\bj}\nh\sec\aj,\,\tan\aj,\,e^\bj\nh\sec\aj),\\
(g_{33}\w,\,g_{34}\w,\,g_{44}\w)\hh
=\hs(e^{-\dj}\nh\sec\cj,\,\,\tan\cj,\,\,e^\dj\nh\sec\cj),\\
(g_{13}\w,\,\,g_{14}\w,\,\,g_{23}\w,\,\,g_{24}\w)\,\,
=\,\,(0,\,0,\,0,\,0)\mathrm{,\ \ \hh where}\\
\hs\aj,\cj:U\to\,(-\pi/2,\pi/2)\,\mathrm{\ and\ }\,\bj,\dj:U\hh\to\nh\,\bbR\hh.
\end{array}
\end{equation}
The $\,(1,1)\,$ tensor field defined, using (\ref{met}), by
\begin{equation}\label{jtt}
\left[\begin{matrix}
J^1_1&J^1_2\cr
J^2_1&J^2_2\cr
\end{matrix}\right]
=\left[\begin{matrix}
-\nh g_{12}\w&-\nh g_{22}\w\cr
g_{11}\w&g_{12}\w\end{matrix}\right]\nnh\nnh,\quad
\left[\begin{matrix}
J^3_3&J^3_4\cr
J^4_3&J^4_4\cr
\end{matrix}\right]
=\left[\begin{matrix}
-\nh g_{34}\w&-\nh g_{44}\w\cr
g_{33}\w&g_{34}\w\end{matrix}\right]\nnh\nnh,\quad J_\ja^p\nh=J_p^\ja\nh=0,
\end{equation}
is an al\-most-com\-plex structure, since (\ref{met}) gives
\begin{equation}\label{det}
\begin{array}{rl}
\mathrm{i)}&\mathrm{det}\hs[g_{\ja\jb}\w]\,=\,\mathrm{det}\hs[g_{pq}\w]\,
=\,1\mathrm{,\ with\hs\ this\hs\ obvious\hs\ consequence\hskip-2.7pt:}\\
\mathrm{ii)}&(g^{11}\nnh,g^{12}\nnh,g^{22}\nnh,g^{33}\nnh,g^{34}\nnh,g^{44})
=(g_{22}\w,-\hn g_{12}\w,\,g_{11}\w,\,g_{44}\w,-\hn g_{34}\w,\,g_{33}\w)\hh.
\end{array}
\end{equation}
\begin{thm}\label{lcstr}
If the functions\/ $\,\aj,\bj,\cj,\dj\,$  in\/ {\rm(\ref{met})} satisfy the
system
\begin{equation}\label{sys}
\begin{array}{rl}
\mathrm{a)}&
(-\nnh1)^p\nh\bj_p\w=\hs\aj_q\w e^{(-\nnh1)^p\hn\dj}\nh\sec\cj-\aj_p\w\tan\cj
\mathrm{\,\ \ whenever\ }\,\{p,q\}=\{3,4\},\\
\mathrm{b)}&
(-\nnh1)^\ja\nh\dj\nh_\ja\w=\,\cj_\jb\w e^{(-\nnh1)^\ja\nh\bj}\nh\sec\aj
-\cj_\ja\w\tan\aj
\mathrm{\hn\ \ whenever\ }\,\{\ja,\jb\}=\{1,2\}
\end{array}
\end{equation}
of four first-or\-der partial differential equations, 
where the subscripts 
denote partial derivatives, then\/ $\,g\,$ in\/ {\rm(\ref{met})} is
a K\"ah\-ler metric on $\,\,U\,$ with the com\-plex-struc\-ture tensor $\,J\,$ 
given by\/ {\rm(\ref{jtt})}, and\/ {\rm(\ref{cls})} holds, on\/ $\,\,U$
rather than\/ $\,M\nh$, for\/ $\,\mathcal{H}\,$ and\/ $\,\mathcal{V}\hs$ 
spanned by the first two and last two coordinate vector fields.

Conversely, for any K\"ah\-ler surface\/ $\,(M\nh,g)\,$ with\/
{\rm(\ref{cls})}, $\,g,J\nh,\mathcal{H}\,$ and\/ $\,\mathcal{V}\hs$ arise,
locally, in suitable local coordinates\/ $\,y^1\nh,\dots,y^4\nh$, from the
above construction applied to some functions\/ $\,\aj,\bj,\cj,\dj\,$
satisfying\/ {\rm(\ref{sys})}.
\end{thm}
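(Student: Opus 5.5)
The plan is to reduce both directions to one computation: for the metrics (\ref{met}), the tensor $\,J\,$ of (\ref{jtt}) is always almost-K\"ah\-ler, and (\ref{sys}) is exactly the vanishing of its Nijen\-huis tensor.

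\textbf{Almost-K\"ah\-ler structure.} By (\ref{det}.i), each diagonal block $\,[g_{\ja\jb}\w]$, $\,[g_{pq}\w]\,$ has determinant $1$. Then (\ref{jtt}) says that on each block $\,J\,$ is the $g$-or\-thog\-o\-nal rotation by $\,\pi/2\,$ compatible with the coordinate orientation: $\,J^2=-\mathrm{Id}\,$ and $\,g(J\cdot,J\cdot)=g\,$ follow from $\,\det=1$. A one-line check gives $\,\omega_{12}\w=g(J\partial_1,\partial_2)=g_{11}\w g_{22}\w-g_{12}^2=1$, and likewise $\,\omega_{34}\w=1$, with the mixed components of $\,\omega\,$ equal to $0$. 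Hence the K\"ah\-ler form is
\[
\omega=dx^1\wedge dx^2+dx^3\wedge dx^4,
\]
which is closed. So $\,(g,J)\,$ is always almost-K\"ah\-ler. Moreover $\,\mathcal{H}=\mathrm{span}(\partial_1,\partial_2)\,$ and $\,\mathcal{V}=\mathrm{span}(\partial_3,\partial_4)\,$ are $\,J$-in\-var\-i\-ant, or\-thog\-o\-nal and in\-te\-gra\-ble. It therefore remains to show that $\,J\,$ is in\-te\-gra\-ble if and only if (\ref{sys}) holds.

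\textbf{Integrability of $\,J$.} Since $\,N(X,JX)=0\,$ and each of $\,\mathcal{H},\mathcal{V}\,$ is spanned by some $\,X,JX$, the Nijen\-huis tensor vanishes on $\,\mathcal{H}\times\mathcal{H}\,$ and on $\,\mathcal{V}\times\mathcal{V}$. The relation $\,N(JX,Y)=-JN(X,Y)\,$ then shows that $\,N=0\,$ iff the single vector $\,N(\partial_1,\partial_3)\,$ vanishes. Because coordinate brackets vanish,
\[
N(\partial_1,\partial_3)=[J\partial_1,J\partial_3]-J[J\partial_1,\partial_3]-J[\partial_1,J\partial_3].
\]
Its $\,\mathcal{H}$-com\-po\-nents involve only the $\,x^p$-de\-riv\-a\-tives of $\,J^\ja_\jb\,$ (that is, of $\,\aj,\bj$) with coefficients built from $\,J^p_q$. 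Its $\,\mathcal{V}$-com\-po\-nents symmetrically involve the $\,x^\ja$-de\-riv\-a\-tives of $\,\cj,\dj$. This gives four real equations.

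The main obstacle is the bookkeeping showing that these four equations, after substituting (\ref{met}), multiplying by suitable invertible factors and taking combinations, become exactly (\ref{sys}a) for $\,p=3,4\,$ and (\ref{sys}b) for $\,\ja=1,2$. I would first compute $\,\partial_pJ^\ja_\jb\,$ in terms of $\,\aj_p\w,\bj_p\w$. I would then use the fact that $\,\partial_pJ\,$ anticommutes with $\,J\,$ on $\,\mathcal{H}$. This reduces the $\,\mathcal{H}$-part of $\,N(\partial_1,\partial_3)\,$ to a $J$-anti\-linear combination, whose two scalar components should be the two cases of (a). The $\,\mathcal{V}$-part then follows by the symmetry $\,(\aj,\bj,x^\ja)\leftrightarrow(\cj,\dj,x^p)$. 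This proves the first assertion.

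\textbf{Converse.} Let $\,\zeta\,$ be the closed unit anti-self-dual form attached to (\ref{cls}) by Proposition~\ref{kheqv}, and set $\,\sigma=(\omega-\sqrt2\,\zeta)/2\,$ and $\,\tau=(\omega+\sqrt2\,\zeta)/2$. Both forms are closed. By the description of $\,\zeta\,$ in Proposition~\ref{kheqv}, together with $\,\mathcal{H}\perp_\zeta\mathcal{V}\,$ and $\,\mathcal{H}\perp_\omega\mathcal{V}$:
\begin{itemize}
\item $\sigma$ restricts to the area form on $\,\mathcal{H}\,$ and to $0$ on $\,\mathcal{V}$, with kernel $\,\mathcal{V}$;
\item $\tau$ restricts to the area form on $\,\mathcal{V}\,$ and to $0$ on $\,\mathcal{H}$, with kernel $\,\mathcal{H}$.
\end{itemize}
A closed form whose kernel is an in\-te\-gra\-ble distribution is basic. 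So locally $\,\sigma\,$ descends to an area form on the two-di\-men\-sion\-al leaf space of $\,\mathcal{V}$. By the two-di\-men\-sion\-al Darboux lemma, $\,\sigma=dy^1\wedge dy^2\,$ with $\,y^1,y^2\,$ constant along the leaves of $\,\mathcal{V}$. Similarly $\,\tau=dy^3\wedge dy^4\,$ with $\,y^3,y^4\,$ constant along the leaves of $\,\mathcal{H}$.

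Then $\,dy^3,dy^4\,$ vanish on $\,\mathcal{H}$, so $\,\partial_{y^1},\partial_{y^2}\,$ span $\,\mathcal{H}$; likewise $\,\partial_{y^3},\partial_{y^4}\,$ span $\,\mathcal{V}$. By orthogonality, $\,g\,$ is block-diagonal. Each block is positive definite with determinant $1$, because the area form is $\,dy^1\wedge dy^2\,$ (respectively $\,dy^3\wedge dy^4$). Any such $\,2\times2\,$ matrix can be written as $\,(e^{-\bj}\sec\aj,\tan\aj,e^{\bj}\sec\aj)\,$ with $\,\aj=\arctan g_{12}\w\,$ and $\,e^{\bj}=g_{22}\w\cos\aj$, and similarly for the other block.

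Finally, $\,J\,$ is the rotation by $\,\pi/2\,$ in each oriented block, which is (\ref{jtt}). Since $\,J\,$ is in\-te\-gra\-ble, the computation of the integrability step yields (\ref{sys}).
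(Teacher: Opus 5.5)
Your proposal is correct, and it takes a different route from the paper's in both directions.

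\textbf{First assertion.} The paper never uses the Nijenhuis tensor. It computes $\nabla\omega$ through Christoffel symbols, (\ref{olp})--(\ref{tol}), and shows that $\nabla\omega=0$ is equivalent to (\ref{sys}), which gives the K\"ahler property directly. You instead prove that (\ref{sys}) is equivalent to $N=0$. You therefore also need the standard fact that an almost-K\"ahler structure with integrable $J$ is K\"ahler (Besse's Prop.~2.29, which the paper itself uses for (iv)$\Rightarrow$(iii)). Your phrase ``it therefore remains'' relies on this fact without saying so, and you should state it.

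The computation you defer does close. Write $\partial J/\partial\alpha$ and $\partial J/\partial\beta$ for the derivatives of the $\mathcal H$-block of $J$ with respect to the parameters. One checks that $J\,\partial J/\partial\alpha=\partial J/\partial\beta$. The $\mathcal H$-component of your $N(\partial_1,\partial_3)$ is then $[-(\partial J/\partial\alpha)\,(d\alpha(J\partial_3)+d\beta(\partial_3))+(\partial J/\partial\beta)\,(d\alpha(\partial_3)-d\beta(J\partial_3))]\,\partial_1$. Here $(\partial J/\partial\alpha)\partial_1$ and $(\partial J/\partial\beta)\partial_1$ are linearly independent. So this component vanishes iff $(d\alpha)J=-d\beta$ on $\mathcal V$, which is (\ref{sys}-a) by (\ref{jpq}). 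The $\mathcal V$-component gives (\ref{sys}-b) by the symmetry you invoke.

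Your computation is thus a direct proof of (iv)$\Leftrightarrow$(v) in Theorem~\ref{meang}. It is lighter than the paper's, since it uses only first derivatives of $J$. The paper's route instead yields $\nabla\omega$ explicitly, and those formulas are reused in Section~\ref{pd}.

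\textbf{Converse.} The paper first takes coordinates adapted to both foliations and writes $\omega=\phi\,dy^1\wedge dy^2+\psi\,dy^3\wedge dy^4$. It reads off from (\ref{epr}) that $\phi$ and $\psi$ depend only on the appropriate variables, and then applies (\ref{zed}) on each factor. You instead use the closed form $\zeta$ of Proposition~\ref{kheqv} to split $\omega$ into closed basic forms $\sigma,\tau$, and apply Darboux on the local leaf spaces. This is valid; note that $\sigma\wedge\tau\ne0$ is what makes $dy^1,\dots,dy^4$ independent. It also produces the doubly adapted coordinates in the same step. However, it relies on the nontrivial half of Proposition~\ref{kheqv}. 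In the paper's adapted coordinates, closedness of each summand follows from $d\omega=0$ at once.
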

We also derive Theorem~\ref{lcstr}, in Sect.\,\ref{pt}, from  
our next result, which addresses the meaning of the system (\ref{sys}).
\begin{thm}\label{meang}
Formulae\/ {\rm(\ref{met})} -- {\rm(\ref{jtt})} always define an
al\-most-K\"ah\-ler metric, in the usual sense of skew-sym\-me\-try and
closedness of\/ $\,\omega=g(J\cdot\hs,\,\cdot\,)$. The five 
conditions listed below are mutually equivalent.
\begin{itemize}
\item[\rm{(i)}]Equations\/ {\rm(\ref{sys})}.
\item[\rm{(ii)}]$\nabla\hn\omega=0\,$ for\/
$\,\omega=g(J\cdot\hs,\,\cdot\,)\,$ and the Le\-vi-Ci\-vi\-ta connection\/ 
$\,\nabla\nh$ of\/ $\,g$.
\item[\rm{(iii)}]$\nabla\nnh J=0$, that is, the K\"ah\-ler property of\/ $\,g$.
\item[\rm{(iv)}]In\-te\-gra\-bi\-li\-ty of the al\-most-com\-plex structure\/
$\,J\nh$.
\item[\rm{(v)}]$\mathcal{V}\nh$-hol\-o\-mor\-phic\-i\-ty of\/ $\,\aj+i\bj\,$
and\/ $\,\mathcal{H}\nh$-hol\-o\-mor\-phic\-i\-ty of\/ $\,\cj+i\dj$.
\end{itemize}
Furthermore, as we show in Sect.\/\,{\rm\ref{pt}}, 
\begin{itemize}
\item[\rm{(vi)}]the first part of\/ {\rm(v)} is equivalent to\/ 
{\rm(\ref{sys}-a)}, the second to\/ {\rm(\ref{sys}-b)}.
\end{itemize}  
\end{thm}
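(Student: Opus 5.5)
The plan is to compute everything in the block form of (\ref{met})--(\ref{jtt}), using (\ref{det}) repeatedly. I will first establish the almost-K\"ah\-ler claim, then get (ii)$\Leftrightarrow$(iii)$\Leftrightarrow$(iv) from general facts, and finally obtain (i)$\Leftrightarrow$(iv)$\Leftrightarrow$(v) together with (vi) from one integrability computation.

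\emph{Almost-K\"ah\-ler property.} Each $2\times2$ block of $J$ has the form $\left[\begin{smallmatrix}-b&-c\cr a&b\end{smallmatrix}\right]$ with $ac-b^2=1$ by (\ref{det}-i). Its square is $(b^2-ac)\,\mathrm{Id}=-\mathrm{Id}$, so $J^2=-\mathrm{Id}$. Next I compute $\omega_{jk}=g_{jl}J^l_k$ block by block:
\[
\omega_{11}=-g_{11}g_{12}+g_{12}g_{11}=0,\qquad \omega_{12}=g_{12}^2-g_{11}g_{22}=-1,
\]
and similarly in the $(3,4)$ block, with the mixed components vanishing. Hence $\omega=-dx^1\wedge dx^2-dx^3\wedge dx^4$. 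This form has constant coefficients, so it is skew-sym\-met\-ric and closed, which gives the first sentence of the theorem. Compatibility $g(J\cdot,J\cdot)=g$ follows from skew-sym\-me\-try of $\omega$ and $J^2=-\mathrm{Id}$.

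\emph{(ii)$\Leftrightarrow$(iii)$\Leftrightarrow$(iv).} Since $\nabla g=0$ and $\omega=g(J\cdot,\cdot)$, the conditions $\nabla\omega=0$ and $\nabla J=0$ are equivalent. For (iii)$\Leftrightarrow$(iv) I will invoke the standard fact that an al\-most-Her\-mit\-i\-an structure with $d\omega=0$ satisfies $2g((\nabla_XJ)Y,Z)=g(N(Y,Z),JX)$, where $N$ is the Nijenhuis tensor. Thus $\nabla J=0$ exactly when $J$ is integrable.

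\emph{(i)$\Leftrightarrow$(iv)$\Leftrightarrow$(v), and (vi).} I will use the $(1,0)$-forms. On the $\mathcal{H}$-block, the covector $g_{11}dx^1+(g_{12}+i)dx^2$ is annihilated by the $(0,1)$-vectors of $J$, up to a choice of sign that I will fix by a direct check. Dividing by $g_{11}=e^{-\beta}\sec\alpha$ and using
\[
(\tan\alpha+i)\,e^{\beta}\cos\alpha=i\,e^{\beta-i\alpha}=i\,e^{-i(\alpha+i\beta)}
\]
yields $\xi=dx^1+i\,e^{-i(\alpha+i\beta)}dx^2$. Analogously, $\eta=dx^3+i\,e^{-i(\gamma+i\delta)}dx^4$. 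The structure $J$ is integrable if and only if $d\xi\wedge\xi\wedge\eta=0$ and $d\eta\wedge\xi\wedge\eta=0$. Write $\xi=dx^1+w\,dx^2$ with $w=ie^{-i(\alpha+i\beta)}$. Then $d\xi=dw\wedge dx^2$, and wedging with $\xi\wedge\eta$ kills the $dx^1,dx^2$ parts of $dw$. So the first condition says that the restriction of $dw$ to $\mathcal{V}$ is a multiple of $\eta|_{\mathcal{V}}$. This means $w$ is hol\-o\-mor\-phic along the leaves of $\mathcal{V}$, equivalently $\alpha+i\beta$ is, since $w$ is a hol\-o\-mor\-phic function of $\alpha+i\beta$ with nonvanishing derivative. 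The symmetric argument handles $\eta$ and $\gamma+i\delta$ along $\mathcal{H}$. This proves (iv)$\Leftrightarrow$(v).

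For (vi), I will write out $\mathcal{V}$-hol\-o\-mor\-phic\-i\-ty as $d(\alpha+i\beta)\wedge\eta=0$ restricted to $\mathcal{V}$. The coefficient of $dx^3\wedge dx^4$ gives
\[
(\alpha_4+i\beta_4)-i\,e^{-i(\gamma+i\delta)}(\alpha_3+i\beta_3)=0 .
\]
I will separate this into real and imaginary parts, using $e^{-i\gamma}=\cos\gamma-i\sin\gamma$, and then solve the resulting linear system for $\beta_3$ and $\beta_4$. The aim is to reach exactly the two equations (\ref{sys}-a) with their $\sec\gamma$ and $\tan\gamma$ coefficients. The same procedure with the roles of the blocks exchanged gives (\ref{sys}-b). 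This yields (vi), and hence (i)$\Leftrightarrow$(v).

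The main obstacle is this last bookkeeping step: the sign and orientation conventions (which covector is $(1,0)$, and whether the holomorphy is in $\alpha+i\beta$ or its conjugate) have to come out so that the real equations match (\ref{sys}) precisely, including the factors $(-1)^p$ and $e^{(-1)^p\delta}$. I would first verify these conventions on the flat case $\alpha=\beta=\gamma=\delta=0$, and then carry out the general elimination.
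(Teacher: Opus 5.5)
Your proposal is correct, but it closes the cycle of equivalences through a different link than the paper uses. The paper proves (i)$\Leftrightarrow$(ii) by direct computation. It writes the components of $\nabla\omega$ in terms of Christoffel symbols, (\ref{nao})--(\ref{tol}), and finds via (\ref{oot}) and (\ref{tol}) that they are combinations of the defects in (\ref{bpe}). The remaining links are then short: (ii)$\Leftrightarrow$(iii) is trivial, (iii)$\Rightarrow$(iv) is (\ref{aci}), and (iv)$\Rightarrow$(iii) is Besse's Prop.~2.29 together with $d\omega=0$. Finally, (v)$\Leftrightarrow$(i) is (vi), obtained in one line by writing $\mathcal V$-holomorphicity of $\alpha+i\beta$ as $(d\alpha)J=-d\beta$ on $\mathcal V$ and reading it off with (\ref{jpq}).

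You never compute $\nabla\omega$ beyond the general almost-K\"ahler identity. Instead you prove (iv)$\Leftrightarrow$(v) with the explicit $(1,0)$-coframe $dx^1+ie^{-i(\alpha+i\beta)}dx^2$, $dx^3+ie^{-i(\gamma+i\delta)}dx^4$. Your route is shorter and more conceptual: it shows that (v) is literally the complex Frobenius condition for this coframe, and it explains why $\alpha+i\beta$ and $\gamma+i\delta$ are the natural complex combinations. The paper's route, in exchange, produces the Christoffel-symbol formulas (\ref{olp}), which it reuses in Section~\ref{pd} to obtain (\ref{bpq}). Its formula (\ref{tol}) also shows explicitly how $\nabla\omega$ measures the failure of (\ref{sys}) for a merely almost-K\"ahler metric of the form (\ref{met}).

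The bookkeeping you flag as the main obstacle works out without adjustment. With the convention $\xi\circ J=i\xi$, which is the one the paper uses for holomorphicity, your covector $g_{11}dx^1+(g_{12}+i)dx^2$ is indeed of type $(1,0)$. Moreover $ie^{-i(\gamma+i\delta)}=e^\delta(\sin\gamma+i\cos\gamma)$. The real part of $\alpha_4+i\beta_4=e^\delta(\sin\gamma+i\cos\gamma)(\alpha_3+i\beta_3)$ is (\ref{sys}-a) with $p=3$. The imaginary part, after eliminating $\beta_3$, is (\ref{sys}-a) with $p=4$.

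One harmless slip: $g_{jl}J^l_k=g(\partial_j,J\partial_k)$ is $\omega_{kj}$, not $\omega_{jk}$, for $\omega=g(J\cdot,\cdot)$. So the correct sign is $\omega=dx^1\wedge dx^2+dx^3\wedge dx^4$, as in (\ref{omg}); closedness is unaffected.
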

We say here that a function $\,\phi:M\to\bbC\,$ on an al\-most-com\-plex
manifold $\,M\,$ is $\,\mathcal{V}\nh${\it-hol\-o\-mor\-phic}, for an 
in\-te\-gra\-ble 
com\-plex-line sub\-bun\-dle $\,\mathcal{V}\hh$ of $\,T\nh M\nh$,
if the restriction of $\,\phi\,$ to every leaf of $\,\mathcal{V}\hh$ is a 
hol\-o\-mor\-phic function on the leaf. (Note that the leaves 
of $\,\mathcal{V}\hs$ are complex manifolds.) 
\begin{rem}\label{swtch}The assumptions in (\ref{met}) -- (\ref{jtt}) and,
consequently, their conclusions, remain unaffected when one switches 
$\,(1,2,\aj,\bj)\,$ with $\,(3,4,\cj,\dj)$. 
\end{rem}
An obvious question arises as to when 
$\,\mathcal{H}\,$ or $\,\mathcal{V}\hs$ in (\ref{cls}) is hol\-o\-mor\-phic.
Without loss of generality, we may consider this to be the case for
$\,\mathcal{V}\nh$. 
Note that hol\-o\-mor\-phic\-i\-ty implies in\-te\-gra\-bi\-li\-ty (the
complex dimension being $\,1$), and $\,\mathcal{H},\mathcal{V}\hs$ must both 
be hol\-o\-mor\-phic whenever they are, locally, the factor distributions
of a Riem\-ann\-i\-an-prod\-uct decomposition of $\,g$.
\begin{thm}\label{noltd}
For any K\"ah\-ler surface\/ $\,(M\nh,g)\,$ with\/ {\rm(\ref{cls})}, 
$\,\mathcal{V}\hs$ in\/ {\rm(\ref{cls})} is hol\-o\-mor\-phic 
as a com\-plex-line sub\-bun\-dle of\/ $\,T\nh M\,$ if and only if
\begin{itemize}
\item[{\rm($*$)}]$\mathcal{H}\,$ has totally geodesic leaves.
\end{itemize}
In the case of\/ $\,g,\mathcal{H},\mathcal{V}\hs$ arising from the 
construction of Theorem\/~{\rm\ref{lcstr}}, 
{\rm($*$)} means that\/
$\,\aj\,$ and\/ $\,\bj\,$ or, equivalently, 
$\,g_{11}\w,g_{12}\w,g_{22}\w$, depend only on\/ $\,x^1$
and\/ $\,x^2\nh$.

All these conclusions remain valid when\/ $\,\mathcal{H}\,$ and\/
$\,\mathcal{V}\hs$ are switched.
\end{thm}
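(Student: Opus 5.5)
We will work in the coordinates of Theorem~\ref{lcstr}, valid locally by its converse part. Since both ($*$) and holomorphicity of $\mathcal{V}$ are local, this costs nothing. The claim that ($*$) means $\aj,\bj$ depend only on $x^1,x^2$ then gives the first equivalence as well: holomorphicity of $\mathcal{V}$ will be shown to have the same coordinate description.

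\emph{Step 1: ($*$) in coordinates.} Leaves of $\mathcal{H}$ are the slices $x^3,x^4=$ const. They are totally geodesic if and only if $\varGamma_{\ja\jb}^p=0$ for all $\ja,\jb\in\{1,2\}$, $p\in\{3,4\}$. Because $g_{\ja p}=0$, we have $\varGamma_{\ja\jb}^p=-\tfrac12 g^{pq}\partial_q g_{\ja\jb}$. So the condition is exactly $\partial_q g_{\ja\jb}=0$ for $q=3,4$, meaning $g_{11},g_{12},g_{22}$ depend only on $x^1,x^2$. The map $(\aj,\bj)\mapsto(g_{11},g_{12},g_{22})$ in (\ref{met}) is injective: $\tan\aj=g_{12}$ and $e^{2\bj}=g_{22}/g_{11}$. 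Hence this is equivalent to $\aj_p=\bj_p=0$ for $p=3,4$.

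\emph{Step 2: holomorphicity of $\mathcal{V}$ in coordinates.} Here $\mathcal{V}$ is the $J$-invariant annihilator of $dx^1,dx^2$. Its holomorphicity can be tested by asking that $\mathcal{V}$ be locally the kernel of $d\phi$ for a holomorphic function $\phi$. Equivalently, since the leaves $x^1,x^2=$ const form the foliation, we ask that the leaf space $(x^1,x^2)$ carry a complex structure making the projection holomorphic. By (\ref{jtt}), $J$ preserves $\mathcal{H}$, and on $\mathcal{H}$ it is given by the matrix built from $g_{\ja\jb}$. The quotient $T M/\mathcal{V}\cong\mathcal{H}$ is identified via $dx^1,dx^2$. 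So the induced complex structure on the local leaf space is well defined exactly when the $2\times2$ block $[J^\ja_\jb]$ is constant along leaves of $\mathcal{V}$, that is, when $\partial_p g_{\ja\jb}=0$.

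I would make this rigorous as follows.
\begin{itemize}
\item Holomorphic $\mathcal{V}$ with integrable $J$ is locally given by a holomorphic submersion $\pi$ to a curve, with fibres the $\mathcal{V}$-leaves. Then $d\pi$ intertwines $J$ on $\mathcal{H}$ with the complex structure of the curve.
\item Conversely, a $J$-compatible leaf space structure makes $\mathcal{V}$ the kernel of a holomorphic map. Its kernel is a holomorphic subbundle.
\end{itemize}
Standard Frobenius-type reasoning, using that $\mathcal{V}$ is holomorphic iff $[\,\mathcal{V}^{0,1},\,T^{1,0}]\subset$ suitable spans, should give the same conclusion. The concrete test is
\[\mathcal{L}_v J\,(TM)\subseteq \mathcal{V}\quad\text{for sections }v\text{ of }\mathcal{V},\]
applied to $v=\partial_3,\partial_4$. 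Since the $\mathcal{H}$-block of $\mathcal{L}_{\partial_p}J$ is $\partial_p[J^\ja_\jb]$, this yields $\partial_p g_{\ja\jb}=0$. This matches Step 1 and proves the theorem.

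The switching statement follows from Remark~\ref{swtch}. The main obstacle is Step 2: pinning down a clean criterion for holomorphicity of a complex-line subbundle, such as the $\mathcal{L}_vJ$ test, and verifying it carefully, including its sufficiency. If the Lie-derivative criterion proves awkward, I would fall back on the leaf-space/holomorphic-submersion characterization.
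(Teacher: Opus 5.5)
Your argument is correct. For the direction ``$\mathcal{V}$ holomorphic $\Rightarrow$ ($*$)'' it takes a different route from the paper; for the converse it is the same.

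The paper's forward direction goes as follows. It takes a local real-holomorphic vector field $w\neq0$ tangent to $\mathcal{V}$ and writes its holomorphicity as $[J,\nabla w]=0$. This amounts to the vanishing of $B^{\mu}_{\lambda p}=J^{\mu}_{\nu}\Gamma^{\nu}_{\lambda p}-J^{\nu}_{\lambda}\Gamma^{\mu}_{\nu p}$. The paper evaluates only the diagonal entries $B^{\lambda}_{\lambda p}=(-1)^{\lambda}g_{11}g_{22}\alpha_p$, via Christoffel symbols, and then needs the K\"ahler equations (\ref{sys}-a) to pass from $\alpha_p=0$ to $\beta_p=0$.

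You instead straighten $\mathcal{V}$ into the fibres of a holomorphic submersion. You then observe that $J$ must descend to the $(x^1,x^2)$ leaf space. Hence the whole block $[J^{\mu}_{\lambda}]$, whose entries are $\pm g_{11},\pm g_{12},\pm g_{22}$, is independent of $x^3,x^4$. Neither Christoffel symbols nor the system (\ref{sys}) are needed. In fact $\partial_pJ^{\mu}_{\lambda}=B^{\mu}_{\lambda p}$ here, because $[J^{\mu}_{\lambda}]$ is $[g_{\lambda\mu}]^{-1}$ times a constant matrix by (\ref{det}-i) and $J^2=-\mathrm{Id}$. So your Lie-derivative test is exactly the paper's condition, with all four entries used instead of two entries plus the PDE.

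For the converse your reasoning matches the paper's. $J$ induces a complex structure on the $(x^1,x^2)$-surface. A local holomorphic coordinate there pulls back to a holomorphic function on $M$ whose level sets are the leaves of $\mathcal{V}$, and Remark~\ref{holsb} applies.

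What each route buys: the paper's stays inside the coordinate computations of Section~\ref{pt} and needs only local holomorphic sections of $\mathcal{V}$. Yours is shorter and more conceptual, but it rests on facts you should state explicitly:
\begin{itemize}
\item the holomorphic flow-box theorem, i.e.\ a rank-one holomorphic subbundle is locally $\ker d\phi$ for a holomorphic $\phi$ with $d\phi\neq0$;
\item after shrinking the domain so that the slices $x^1,x^2=\mathrm{const}$ are connected, $\phi$ factors through $(x^1,x^2)$ via a local diffeomorphism;
\item almost complex structures on surfaces are integrable.
\end{itemize}
With these in place, the leaf-space argument proves both implications by itself. The Lie-derivative criterion you flag as the main obstacle then needs no separate justification.
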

\begin{cor}\label{prdct}
Whenever\/ {\rm(\ref{cls})} holds on a K\"ah\-ler surface\/ $\,(M\nh,g)$,
hol\-o\-mor\-phic\-i\-ty of both\/  
$\,\mathcal{H}\,$ and\/ 
$\,\mathcal{V}\hs$ is equivalent to their being parallel, that is,
to\/ {\rm(\ref{trv})}.
\end{cor}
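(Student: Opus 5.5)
One direction is immediate. If $\,\mathcal{H}\,$ and $\,\mathcal{V}\hs$ are parallel, i.e.\ (\ref{trv}) holds, then both are hol\-o\-mor\-phic, as already noted before Theorem~\ref{noltd}. For the converse, suppose both $\,\mathcal{H}\,$ and $\,\mathcal{V}\hs$ are hol\-o\-mor\-phic. We then apply Theorem~\ref{noltd} twice: once as stated, and once with $\,\mathcal{H}\,$ and $\,\mathcal{V}\hs$ switched, which its last sentence allows. The conclusion is that the leaves of both $\,\mathcal{H}\,$ and $\,\mathcal{V}\hs$ are totally geodesic.

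It then remains to show that two complementary orthogonal distributions, both with totally geodesic leaves, are parallel. Let $\,X,Y\,$ be sections of $\,\mathcal{H}\,$ and $\,Z\,$ a section of $\,\mathcal{V}\nh$. Total geodesy of the $\,\mathcal{H}\,$ leaves, combined with integrability, gives $\,g(\nabla\nnh_XY,Z)=0$. For the remaining component we differentiate $\,g(Y,Z)=0\,$ along $\,Z$ and use that $\,\mathcal{V}\hs$ has totally geodesic leaves, so $\,\nabla\nnh_ZZ\,$ lies in $\,\mathcal{V}\nh$. This yields $\,g(\nabla\nnh_ZY,Z)=-g(Y,\nabla\nnh_ZZ)=0$. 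Polarizing in $\,Z$, we get that $\,g(\nabla\nnh_ZY,Z')\,$ is skew in $\,Z,Z'$. On the other hand,
\[
g(\nabla\nnh_ZY,Z')-g(\nabla\nnh_{Z'}Y,Z)=-g(Y,\nabla\nnh_ZZ'-\nabla\nnh_{Z'}Z)=-g(Y,[Z,Z'])=0,
\]
by integrability of $\,\mathcal{V}\nh$, so the same expression is also symmetric in $\,Z,Z'$. Being both skew and symmetric, it vanishes.

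Together, the two facts give $\,\nabla Y\in\mathcal{H}\,$ in all directions, so $\,\mathcal{H}\,$ is parallel, and by orthogonality so is $\,\mathcal{V}\nh$. The local de Rham decomposition theorem then yields (\ref{trv}).

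Alternatively, in the coordinates of Theorem~\ref{lcstr}, the criterion of Theorem~\ref{noltd} applied both ways says that $\,g_{\ja\jb}\,$ depend only on $\,x^1,x^2\,$ and $\,g_{pq}\,$ only on $\,x^3,x^4$. Since $\,g_{\ja p}=0$, this makes $\,g\,$ visibly a product metric. No step here is a real obstacle; the only point needing care is invoking the switched version of Theorem~\ref{noltd}.
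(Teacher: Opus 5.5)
Your proof is correct. The paper's own proof is your closing ``alternative''. It combines Theorem~\ref{noltd}, applied as stated and with $\mathcal{H},\mathcal{V}$ switched, with the observation opening Section~\ref{pd}: in the coordinates of Theorem~\ref{lcstr}, the leaves of $\mathcal{H}$ are totally geodesic if and only if $g_{11},g_{12},g_{22}$ depend only on $x^1,x^2$. The symmetric statement for $\mathcal{V}$, together with $g_{\lambda p}=0$, makes the metric visibly a Riemannian product, so (\ref{trv}) follows directly, with no appeal to de~Rham.

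Your main argument takes a different, intrinsic route. After the same double use of Theorem~\ref{noltd}, you prove the general fact that two orthogonal complementary integrable distributions with totally geodesic leaves are parallel, and then invoke the local de~Rham theorem. This step needs no special coordinates (only Theorem~\ref{noltd} itself relies on them) and works in any dimension. The cost is that you need de~Rham to get from parallelism to (\ref{trv}), whereas the coordinate route produces the product structure outright.

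One simplification: total geodesy of the $\mathcal{V}$-leaves already means their whole second fundamental form vanishes. So $\nabla_Z Z'$ is tangent to $\mathcal{V}$ for all sections $Z,Z'$ of $\mathcal{V}$, giving $g(\nabla_Z Y,Z')=-g(Y,\nabla_Z Z')=0$ at once. Your polarization and symmetry step is correct but unnecessary.
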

We prove Theorem~\ref{noltd} and Corollary~\ref{prdct} in Section~\ref{pd},
where we also point out (see Remark~\ref{smple}) that the situation of
Theorem~\ref{noltd} has a local description in terms of Theorem~\ref{lcstr},
much simpler than the general case of (\ref{cls}). More precisely,
Remark~\ref{smple} provides {\it a general local construction of K\"ah\-ler 
surfaces\/ $\,(M\nh,g)\,$ which constitute hol\-o\-mor\-phic 
bundles over real surfaces such that the horizontal distribution is 
in\-te\-gra\-ble and the bundle projection is a Riemannian sub\-mer\-sion.}

The following result, proved in Appendix II, establishes the existence of
numerous solutions to the system (\ref{sys}). 
\begin{prop}\label{slvbl}Real-an\-a\-lyt\-ic
solutions\/ 
$\,(\aj,\bj,\cj,\dj)\,$ to\/ {\rm(\ref{sys})} exist on a neighborhood
\hbox{of
any\/ $\hs z\hn\in\nh\bbR\hn^4\nh$ and realize all first-or\-der initial data
satisfying  
{\rm(\ref{sys})}\nh\ at $\,z$.}
\end{prop}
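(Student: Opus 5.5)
The plan is to apply Cartan's test (the Cartan--Kähler theorem) to (\ref{sys}), viewed as a first-order quasi\-lin\-e\-ar system of four equations in four unknowns $u=(\aj,\bj,\cj,\dj)$ of four variables. By (vi) of Theorem~\ref{meang}, (\ref{sys}-a) says that $\aj+i\bj$ is $\mathcal{V}$-hol\-o\-mor\-phic. For fixed $x^1,x^2$ it is a Cauchy--Riemann-type system in $(x^3,x^4)$, with coefficients determined by the conformal structure of the metric $g_{pq}$ built from $\cj,\dj$. Symmetrically, (\ref{sys}-b) is a Cauchy--Riemann-type system in $(x^1,x^2)$ for $\cj+i\dj$, with coefficients built from $\aj,\bj$.

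First I will write the system as a Pfaffian exterior differential system on $J^1(\bbR^4,\bbR^4)\cong\bbR^4\times\bbR^4\times\bbR^{16}$. The independent variables are the $x^j$, the dependent ones $u$, and the first derivatives are $u_j$, restricted to the submanifold $\Sigma$ cut out by the four equations (\ref{sys}). Since each equation is solved for a single derivative ($\bj_p$, resp.\ $\dj_\ja$), $\Sigma$ is a smooth submanifold of codimension $4$. It is parametrized by $x,u$ together with the twelve free derivatives $\aj_j$, $\bj_\ja$, $\cj_j$, $\dj_p$. On $\Sigma$ the contact forms are $\theta^a=du^a-u^a_j\,dx^j$, and the independence condition is $dx^1\wedge\dots\wedge dx^4\ne0$.

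Next I will compute the tableau and its Cartan characters. At a point of $\Sigma$, the symbol consists of the $4\times4$ matrices $(\pi^a_j)$ obeying the linearized constraints. These say that $(\pi^\bj_3,\pi^\bj_4)$ is determined by $(\pi^\aj_3,\pi^\aj_4)$ through a fixed invertible, complex-structure-like linear map, with $\cj,\dj$ frozen, and similarly $(\pi^\dj_1,\pi^\dj_2)$ is determined by $(\pi^\cj_1,\pi^\cj_2)$. The tableau has dimension $16-4=12$. I will choose a generic flag, e.g.\ ordering the coordinates as $x^1,x^3,x^2,x^4$, and compute the characters $s_1,\dots,s_4$. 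One expects $s_1=4$, $s_2=4$, $s_3=2$, $s_4=2$, or some similar pattern with $s_1+2s_2+3s_3+4s_4$ equal to the dimension of the first prolongation. This equality is what makes the tableau involutive; it is the standard behaviour of Cauchy--Riemann-type determined systems, which have no characteristic covectors in generic directions.

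The main obstacle is the torsion, i.e.\ showing that the system is in involution and not merely has an involutive symbol. Because the coefficients of (\ref{sys}-a) depend on $\cj,\dj$, and those of (\ref{sys}-b) on $\aj,\bj$, I must check that the torsion of $d\theta^a$ modulo $\theta$ can be absorbed by redefining the $\pi^a_j$. I will argue that this holds because the system is determined (four equations, four unknowns) and solved for distinct derivatives. For such systems the only possible obstructions come from compatibility conditions $\partial_j\partial_k=\partial_k\partial_j$ between equations involving different derivatives of the same function. Here $\bj$ is constrained only in the $p$-directions and $\dj$ only in the $\ja$-directions, and each constraint gives, for one unknown, exactly the two derivatives in one complementary plane. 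Hence no cross-differentiation yields a new first-order relation, and the torsion can be absorbed. If this direct check proves messy, I will instead verify Cartan's test numerically at a generic integral element by computing the rank of the polar equations.

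Once involution is established, the Cartan--Kähler theorem gives real-analytic integral manifolds through every integral element of $\Sigma$. Every point of $\Sigma$, i.e.\ every first-order jet at $z$ satisfying (\ref{sys}), carries an ordinary integral element, since the flag conditions are open and constant-rank and hold everywhere. This yields real-analytic solutions near $z$ realizing arbitrary admissible first-order initial data.
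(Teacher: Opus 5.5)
Your setup matches the paper's. The manifold $M\subset\bbR^{20}$ of Appendix~II is exactly your $\Sigma$: its fibre coordinates $A,\dots,D$ and $E,\dots,H$ are the first derivatives of $\alpha$ and $\gamma$, and $K,L,P,Q$ are the unconstrained $\beta_1,\beta_2,\delta_3,\delta_4$. The paper's codimensions $c_k$ encode your characters via $s_k=c_k-c_{k-1}$ for $k\le3$ and $s_4=16-c_3$. The gap is that the whole content of the proof (characters, first prolongation, torsion) is only anticipated, and what you anticipate would make the test fail.

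Let $V=\bbR^4$ be the independent variables, $W=\bbR^4$ the dependent ones, and $\sigma_\xi:W\to\bbR^4$ the symbol of (\ref{sys}) at a covector $\xi$. It is block-diagonal with determinant $\pm(g^{\lambda\mu}\xi_\lambda\xi_\mu)(g^{pq}\xi_p\xi_q)$, so every coordinate covector $dx^j$ is characteristic. If $\xi$ annihilates the $3$-plane $E\subset V$ spanned by the first three flag vectors, then on the tableau $A$ the kernel of $\pi\mapsto\pi|_E$ is $\{w\otimes\xi:\sigma_\xi(w)=0\}$. Hence $s_1+s_2+s_3=12-\dim\ker\sigma_\xi$.

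For your flag $x^1,x^3,x^2,x^4$ one has $\xi=dx^4$. The $(\gamma,\delta)$-block of $\sigma_\xi$ vanishes, since it involves only $\xi_1,\xi_2$, so $\dim\ker\sigma_\xi=2$. This gives exactly your $(4,4,2,2)$. Its Cartan bound $4+8+6+8=26$ exceeds the actual $\dim A^{(1)}=24$, so with that flag, or any coordinate flag, Cartan's test cannot succeed. The correct pattern is not ``similar'': for a determined system with a noncharacteristic direction, a generic flag forces $s_4=0$.

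Three things need to be supplied.

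First, a flag in general position relative to $\bbR^2\oplus\bbR^2$. The paper's genericity conditions leading to (\ref{gnc}) and (\ref{ods}) ensure in particular that the annihilator of the projected $E_3$ is noncharacteristic. This gives $s=(4,4,4,0)$, i.e.\ the paper's $c_k=4(k+1)$.

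Second, the computation of $A^{(1)}$, which the paper does via Cartan's lemma (\ref{crt}). There are $10+10$ symmetric second derivatives of $\alpha,\gamma$ and $3+3$ free components $\beta_{\lambda\mu},\delta_{pq}$. Subtract the two conditions $\beta_{34}=\beta_{43}$ and $\delta_{12}=\delta_{21}$, whose principal parts are $g^{pq}\alpha_{pq}$ and $g^{\lambda\mu}\gamma_{\lambda\mu}$. So $\dim A^{(1)}=24=4+2\cdot4+3\cdot4$, which is involutivity.

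Third, an actual torsion argument. Your cross-differentiation paragraph does not prove it: $\beta_{34}=\beta_{43}$ does produce a relation, harmless only because $\alpha_{33},\alpha_{44}$ enter it with nonzero coefficients. The clean version is a count. Skew-symmetrization $A\otimes V^*\to W\otimes\Lambda^2V^*$ has a $48$-dimensional domain and $24$-dimensional kernel $A^{(1)}$, so it maps onto the $24$-dimensional target. Hence torsion is always absorbable, and horizontal integral $4$-elements exist over every point of $\Sigma$. This is the paper's statement that the $24\times48$ system (\ref{prc}) has rank $24$, and it is what yields ``all first-order initial data.''

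Your remark on noncharacteristic covectors also gives a much shorter proof than the Cartan--K\"ahler route. Since $dx^1+dx^3$ is noncharacteristic, in the coordinates $(t,x^2,x^3,x^4)$ with $t=x^1+x^3$ the system (\ref{sys}) is equivalent to a real-analytic normal form $\partial_t(\alpha,\beta,\gamma,\delta)=F(\alpha,\beta,\gamma,\delta,\text{tangential derivatives})$. Cauchy--Kovalevskaya with affine Cauchy data on $\{t=t(z)\}$ then realizes every $1$-jet at $z$ satisfying (\ref{sys}).
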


\renewcommand{\thethm}{\thesection.\arabic{thm}}
\renewcommand{\theprop}{\thesection.\arabic{prop}}
\section{Preliminaries}\label{pr}
\setcounter{equation}{0}
All manifolds and mappings, including
tensor fields, are assumed to be smooth, and all manifolds -- to be connected.

In local coordinates $\,y^1\nh,\dots,y^4\nh$,
for functions $\,\phi\,$ and $\,\psi\nh$, obviously,
\begin{equation}\label{epr}
\begin{array}{l}
\mathrm{the\hn\ differential\hn\ }2\hyp\mathrm{form\hn\
}\,\phi\,dy^1\wedge dy^2\nh+\,\psi\,dy^3\wedge dy^4\mathrm{\hn\ is}\\
\mathrm{closed\hs\ if\hs\ and\hs\ only\hs\ if\hs\ 
}\,\partial_3\w\phi=\partial\hn_4\w\phi=\partial_1\w\psi
=\partial_2\w\psi=0,
\end{array}
\end{equation}
that is, if and only if
$\,\phi\,$ and $\,\psi\,$ depend only on $\,y^1\nh,y^2$ and, 
respectively, $\,y^3\nh,y^4\nnh$.

The Nijen\-huis tensor $\,N\hs$ of an al\-most-com\-plex structure
$\,J\,$ sends vector fields $\,v,w\,$ to the vector field 
$\,N(v,w)=[v,w]+J[Jv,w]+J[v,Jw]-[Jv,Jw]$, easily seen to coincide with
$\,[J\nabla_{\nnh\!v}\w\nnh J
-\nabla_{\!\!J\hn v}\w J]\hh w+[\nabla_{\!\!J\hn w}\w J
-J\nabla_{\nh\!w}\w J]\hh v\,$ for any tor\-sion-free connection
$\,\nabla\nnh$. The New\-land\-er-Ni\-ren\-berg theorem now 
yields this well-known observation:
\begin{equation}\label{aci}
\begin{array}{l}
\mathrm{an\hs\ al\-most}\hyp\hs\mathrm{com\-plex\hs\ structure\hs\
parallel\hs\ with\hs\ re}\hyp\\
\mathrm{spect\hn\ to\hn\ a\hn\ tor\-sion}\hyp\mathrm{free\hn\ connection\hn\
is\hn\ in\-te\-gra\-ble.}
\end{array}
\end{equation}
For a $\,2$-form $\,\zeta\,$ on a manifold, $\,\varTheta=d\zeta$, and 
\begin{equation}\label{dph}
\varTheta(v,w,w')\,\mathrm{\ equals\ }\,d_v\w[\zeta(w,w')]\,
-\,\zeta([v,w],w')\,+\,\mathrm{cycl},
\end{equation}
$+\,\mathrm{cycl}\,$ meaning here {\it summed cyclically over\/} $\,v,w,w'\nh$.
See, e.g.,  \cite[p.\,21]{besse}.

We will use Car\-tan's lemma \cite[p.\,18]{sternberg}: in a
vector space, given any vectors $\,w\nh_1\w,\dots,w\nh_k\w$ and
linearly independent vectors $\,v\nh_1\w,\dots,v\nh_k\w$, one has
\begin{equation}\label{crt}
\begin{array}{l}
v\nh_1\w\wedge w\nh_1\w+\ldots+v\nh_k\w\wedge w\nh_k\w=0\,\mathrm{\nh\ if\nh\
and\ only\nh\ if\nh\ }\,w\nh_i\w
=h_{i1}\w v\nh_1\w+\dots+h_{ik}\w v\nh_k\w\mathrm{\nh\ for}\\
\mathrm{all\hs\ \hs}\,i\mathrm{,\hs\ with\hs\ some\hs\
\hs}\,h_{ij}\w\,\mathrm{\ forming\hs\ a\hs\ symmetric\hs\ }\,k\times k\,\mathrm{\hs\ matrix\hskip-3pt:\
}\,h_{ij}\w=\,h_{ji}\w.
\end{array}
\end{equation}
We also need the following well-known fact -- see, e.g., 
\cite[formula (11.1)]{derdzinski-piccione-terek}: 
\begin{equation}\label{zed}
\begin{array}{l}
\mathrm{in\ dimension\ }\hs\,n\,\mathrm{\ any\ differential\
}\hs\,n\hyp\mathrm{form\
without\ zeros\ equals,\ lo}\hyp\\ 
\mathrm{cally,\nh\
}\,dx^1\nnh\wedge dx^2\nnh\wedge\ldots\wedge\hs dx^n\nh\mathrm{\ for\ 
some\ local\ coordinates\ }\,x^1\nh,\dots,x^n\nh.
\end{array}
\end{equation}
\begin{rem}\label{krint}Given a rank-two differential $\,2$-form
$\,\zeta=\xi^1\wedge\hs\xi^2$
on a Riem\-ann\-i\-an man\-i\-fold $\,(M\nh,g)\,$ of dimension $\,n\ge2$,
the distribution $\,\mathrm{Ker}\,\hs\zeta\,$
is in\-te\-gra\-ble if and only if 
$\,d\hh\zeta=\eta\wedge\zeta\,$ for some $\,1$-form
$\,\eta$, and such $\,\eta\,$ becomes unique if one assumes that
$\,\eta=g(v,\,\cdot\,)\,$ with a vector field $\,v\,$ having 
$\,\zeta(v,\,\cdot\,)=0$. In fact, the `if' part is obvious from (\ref{dph}).
Assuming in\-te\-gra\-bi\-li\-ty of $\,\mathrm{Ker}\,\hs\zeta\,$ and using
(\ref{dph}), for $\,\varTheta=d\zeta$, we get
$\,\varTheta(v,v'\nh,\,\cdot\,)=0\,$ whenever
$\,\zeta(v,\,\cdot\,)=\zeta(v'\nh,\,\cdot\,)=0$. Extending
$\,\xi^1\wedge\hs\xi^2$ locally to a local trivialization
$\,\xi^1\nh,\dots,\xi^n$ of $\,T^*\hskip-1.8ptM\,$ dual to a local frame 
$\,e\hn_1\w,\dots,e\hn_ n\w$, and expressing $\,\varTheta$ as a
functional combination of linearly independent threefold 
exterior products of the $\,1$-forms $\,\xi^i$, we now see that the
only possibly nonzero coefficients occur when $\,i>2\,$ for at most one
$\,i\,$ present in the product, which proves our claim.
\end{rem}
\begin{rem}\label{holsb}For any hol\-o\-mor\-phic function $\,f:M\to\bbC\,$
without critical points on a complex manifold $\,M\,$ of any complex dimension
$\,n\ge2$, the distribution $\,\mathcal{H}\,$ on $\,M\,$ tangent to 
the level hyper\-sur\-faces
of $\,f\,$ is hol\-o\-mor\-phic 
as a vector sub\-bun\-dle of $\,T\nh M\nh$. In fact, given 
hol\-o\-mor\-phic vector fields $\,w\nh_j\w$, $\,j=1,\dots,n$, on an open set
$\,\,U\subseteq M\nh$, trivializing $\,T\hn U\nnh$, and rearranged so that
$\,f\hskip-2.2pt_1\w\ne0\,$ everywhere in $\,\,U\nnh$, with
$\,f\hskip-2.2pt_j\w$ denoting the 
directional derivative of $\,f\hh$ along $\,w\nh_j\w$, the restriction of 
$\,\mathcal{H}\,$ to $\,\,U$ has the 
hol\-o\-mor\-phic local trivialization $\,v\nh_j\w$, $\,j=2,\dots,n$, given
by $\,v\nh_j\w=f\hskip-2.2pt_1\w w\nh_j\w-f\hskip-2.2pt_j\w w\nh_1\w$.
\end{rem}
\begin{rem}\label{sdtwf}
It is well known -- see, for instance, \cite[Lemma 7.1]{derdzinski-park-shin}
-- that the spaces $\,\Lambda\nnh^\pm$ of self-dual and anti-self-dual 
bi\-vec\-tors in an oriented Euclidean four-space $\,\mathcal{T}$
are themselves naturally oriented, and their length $\,\sqrt{2\,}$
positive orthogonal bases
are precisely the same as all the triples
\begin{equation}\label{bas}
\pm\hh\eu\wedge\ed+\et\wedge\eq\hh,\quad 
\pm\hh\eu\wedge\et+\eq\wedge\ed\hh,\quad 
\pm\hh\eu\wedge\eq+\ed\wedge\et,
\end{equation}
where $\,\eu,\ed,\et,\eq$ is a positive or\-tho\-nor\-mal basis of 
$\,\mathcal{T}\nnh$. In addition, $\,\eu,\ed,\et,\eq$ is uniquely determined by
(\ref{bas}) up to an overall sign change. Since, as pointed out in
\cite[formula (7.7)]{derdzinski-park-shin}, the mapping sending
$\,(\eu,\ed,\et,\eq)\,$ to (\ref{bas}) is e\-qui\-var\-i\-ant relative to 
a two-fold covering homomorphism 
$\,\mathrm{SO}(4)\to\mathrm{SO}(3)\times\mathrm{SO}(3)$, 
\begin{equation}\label{ral}
\pm(\eu,\ed,\et,\eq)\,\mathrm{\ depends\
real}\hyp\mathrm{an\-a\-lyt\-ic\-al\-ly\ on\ (\ref{bas}).}
\end{equation}
\end{rem}
\begin{rem}\label{graff}Given a sub\-space $\,V\nnh$ of a vector space
$\,W\nnh$, the set $\,\mathcal{Y}$ of all sub\-spaces $\,H\,$ of $\,W\hs$
with $\,W\nh=H\oplus\hs V\hs$ is well known -- see, e.g., 
\cite[p.\,109]{bryant-chern-gardner-goldschmidt-griffiths} --
to carry a natural 
structure of an af\-fine space. To be specific, $\,\mathcal{Y}\hs$ stands in a
one-to-one correspondence with the pre\-im\-age of the point
$\,\mathrm{Id}\hh_{W\nh/\hh V}\w\nnh$ under the sur\-ject\-ive linear operator
$\,\mathrm{Hom}\hh(W\nh/\hh V\nh,W)\to\mathrm{End}\hh(W\nh/\hh V)\,$ 
sending any $\,\Phi\,$ to the composition $\,\Pi\Phi\nh$, where
$\,\Pi:W\to W\nh/\hh V\nnh$ is the quotient projection. The correspondence
associates with $\,H\in\mathcal{Y}\hs$ the inverse of the restriction 
$\,\Pi:H\to W\nh/\hh V\nnh$. More explicitly, the vector space
$\,\mathcal{X}=\mathrm{Hom}\hh(W\nh/\hh V\nh,V)\,$ acts on $\,\mathcal{Y}\,$
simply transitively, so that $\,H+\Psi\nh$, for
$\,(H,\Psi)\in\mathcal{Y}\times\nnh\mathcal{X}$, equals the image of $\,H\,$
under $\,\mathrm{Id}\hh_W\w\nnh+\Psi\Pi$.
\end{rem}

\section{Proofs of Propositions~\ref{kheqv} and~\ref{crvob}}\label{ps}
\setcounter{equation}{0}
\begin{proof}[{\it Proof of Proposition\/~{\rm\ref{kheqv}}}] 
We use the metric to identify vectors with $\,1$-forms, and 
bi\-vec\-tors with both $\,2$-forms and skew-ad\-joint $\,(1,1)\,$
tensors. The ``purely 
algebraic'' version of Proposition~\ref{kheqv} is also true. It arises when
one deletes the words {\it in\-te\-gra\-ble}, in (\ref{cls}), and {\it
closed}, and uses the area forms of $\,\mathcal{H}\,$ and
$\,\mathcal{V}\hs$ rather than referring to ``leaves'' (which now need not
exist). In fact, for $\,\zeta\hn $ described in the final clause of 
the proposition, $\,\sqrt{2\hh}\zeta^\pm\nnh$ are the initial entries 
in (\ref{bas}), where $\,\zeta^-\nnh\nh$ stands for $\,\zeta$, and 
$\,\sqrt{2\hh}\zeta^+\nnh\nh$ for the K\"ah\-ler form 
$\,\omega=g(J\cdot\hs,\,\cdot\,)$,
while $\,\eu,\ed\,$ and $\,\et,\eq$ are fixed positive or\-tho\-nor\-mal local
trivializations of $\,\mathcal{H}\,$ and $\,\mathcal{V}\nh$.
Due to Remark~\ref{sdtwf}, 
$\,*\hh\zeta^\pm\nnh\hn=\pm\hh\zeta^\pm\nnh$ 
and $\,|\hh\zeta^\pm\nh|=1$, while $\,\zeta^\pm\nnh$ uniquely 
determine $\,\mathcal{H}\,$ and $\,\mathcal{V}\hs$ since, obviously,
\begin{equation}\label{unq}
\mathcal{H}=\mathrm{Ker}\,(\zeta^+\nnh\nnh+\hs\zeta^-\nh)\,\,\mathrm{\ and\
}\,\,\mathcal{V}\nh=\mathrm{Ker}\,(\zeta^+\nnh\nnh-\hs\zeta^-\nh),
\end{equation}
showing that the assignment sending (\ref{cls}) to $\,\zeta\,$ is injective.

To verify its surjectivity, we start with $\,\zeta\,$ as in the first part of 
the proposition, let $\,\zeta^-\nnh\nnh=\zeta\,$ and 
$\,\sqrt{2\hh}\zeta^+\nnh\nnh=\omega\,$ (the K\"ah\-ler form), and 
then extend $\,\sqrt{2\hh}\zeta^\pm\nnh$ to length $\,\sqrt{2\,}$ 
positive or\-thog\-o\-nal local trivializations of the oriented bundles of 
self-dual and anti-self-dual $\,2$-forms, cf.\ Remark~\ref{sdtwf}. Next, we 
again invoke Remark~\ref{sdtwf}, including (\ref{ral}), to express the latter
local trivializations as (\ref{bas}) for a smooth local or\-tho\-nor\-mal
frame $\,(\eu,\ed,\et,\eq)$. Finally, we
set $\,\mathcal{H}=\mathrm{span}\hh(\eu,\ed)\,$ and 
$\,\mathcal{V}\nh=\mathrm{span}\hh(\et,\eq)$, so that surjectivity follows. 

Proving Proposition~\ref{kheqv} is thus reduced to showing 
that closedness of $\,\zeta$ implies 
in\-te\-gra\-bi\-li\-ty of both distributions $\,\mathcal{H},\mathcal{V}\nh$,
and vice versa. The first implication is straightforward: due to
closedness of $\,\omega=\sqrt{2\hs}\zeta^+\nnh\nh$, if 
$\,\zeta=\zeta^-\nnh\nh$ is closed, so are both  
$\,\zeta^+\hskip-3pt\pm\zeta^-\nnh$, and, consequently,
$\,\mathcal{H}\,$ and $\,\mathcal{V}\,$ are in\-te\-gra\-ble in view of
(\ref{unq}) and (\ref{dph}).

For the converse implication, use Remark~\ref{krint} and (\ref{unq})
to write
\[
d\hh[\hh\zeta^+\hskip-3pt\pm\zeta^-\nnh]\,
=\,\eta^\pm\nnh\nnh\wedge[\hh\zeta^+\hskip-3pt\pm\zeta^-\nnh]\,
=\,\eta\wedge[\hh\zeta^+\hskip-3pt\pm\zeta^-\nnh]
\]
for $\,\zeta^\pm\nnh$ and $\,\eu,\ed,\et,\eq$ as above, where 
$\,\eta=\eta^+\hskip-3pt+\eta^-\nh$, and the $\,1$-form $\,\eta^+\nh$, 
or $\,\eta^-\nh$, treated as a vector field, is a functional combination of 
$\,\eu,\ed$ or, respectively, $\,\et,\eq$. Add\-ing/\nh sub\-tract\-ing, we
get $\,d\hs\omega=\eta\wedge\omega\,$ and $\,d\hh\zeta=\eta\wedge\zeta$. As
$\,\omega\,$ is closed and nondegenerate, $\,\eta=0$, which completes the proof.
\end{proof}
In a Riemannian manifold of dimension $\,n\ge4\,$ with the scalar curvature
$\,\sca$, the action of the Weyl tensor on $\,2$-forms $\,\zeta\,$
satisfies the following well-known formula
\cite[p.\,409]{derdzinski-83}, \cite[p.\,458]{derdzinski-00}, easily 
derived from the Ric\-ci identity:
\begin{equation}\label{wei}
W\nnh\zeta=\displaystyle{\frac12}\left[\delta(\nabla\nh\zeta-d\zeta)
-d\hs\delta\zeta\right]
+\displaystyle{\frac{n-4}{2(n-2)}}\,\{\ric,\zeta\}
+\displaystyle{\frac{\sca}{(n-1)(n-2)}}\hs\zeta,
\end{equation}
where $\,\{\,,\}\,$ is the anticommutator and $\,\ric\,$ the Ric\-ci tensor.
In local coordinates,
\begin{equation}\label{lco}
\begin{array}{l}
W_{\!ijpq}\w\zeta^{pq}\nh=-\,\zeta_{pi,\hh j}\w{}^p-\zeta_{jp,\hh i}\w{}^p
-\zeta_{pj,}\w{}^p{}_i\w+\zeta_{pi,}\w{}^p{}_j\w\phantom{\displaystyle{\frac12}}\\
\phantom{W_{\!ijpq}\w\zeta^{pq}\nnh}
+\,\displaystyle{\frac{n-4}{n-2}}(\ric_j^{\hs p}\zeta_{ip}\w
+\ric_i^{\hs p}\zeta_{pj}\w)
+\displaystyle{\frac{2\hskip.8pt\sca}{(n-1)(n-2)}}\hs\zeta\hh_{i\hn j}\w,
\end{array}
\end{equation}
the index range, in contrast with (\ref{rng}), being $\,i,j,p,q=1,\dots,n$, 
and $\,\delta(\nabla\nh\zeta-d\zeta)$ having the coordinate expression
$\,-\zeta_{pi,\hh j}\w{}^p-\zeta_{jp,\hh i}\w{}^p
=\zeta\hh_{i\hn j,\hh p}\w{}^p-(\zeta\hh_{i\hn j,\hh p}\w+\zeta_{pi,\hh j}\w
+\zeta_{jp,\hh i}\w)^{,\hh p}\nh$. As
$\,2\hh|\hh\zeta|^2\nh=\zeta\hh^{i\hn j}\nh\zeta\hh_{i\hn j}\w$,
the La\-plac\-i\-an
of $\,|\hh\zeta|^2$ equals
$\,\zeta\hh^{i\hn j,\hh p}\nh\zeta\hh_{i\hn j,\hh p}\w
+\zeta\hh^{i\hn j}\nh\zeta\hh_{i\hn j,\hh p}\w{}^p=2\hh|\nabla\nh\zeta|^2\nh
+2\langle\zeta,\delta\hh\nabla\nh\zeta\rangle$. 
When $\,|\hh\zeta|\,$ is constant,\nh\ $\,n=4$,\nh\ and
$\,d\zeta=\delta\zeta=0$,\nh\ (\ref{wei}) with
$\,\langle\zeta,\delta\hh\nabla\nh\zeta\rangle=-|\nabla\nh\zeta|^2\nh$ gives
\begin{equation}\label{wzz}
\langle W\nnh\zeta,\zeta\rangle\,-\,\sca\hs|\hh\zeta|^2\nnh/\nh6\,
=\,-|\nabla\nh\zeta|^2\nnh/\nh2.
\end{equation}
\begin{proof}[{\it Proof of Proposition\/~{\rm\ref{crvob}}}]
The existence of a decomposition (\ref{cls}) gives rise, via 
Proposition~\ref{kheqv}, to a unit-length closed anti-self-dual $\,2$-form
$\,\zeta\,$ which, due to its anti-self-dual\-i\-ty, is also co\hh-clos\-ed.
Now the first part of (\ref{min}) follows from (\ref{wzz})
since $\,\mathrm{min}\,\hs\mathrm{spec}\,W^-\nnh\nh$,
at any point $\,z$,
equals the minimum of $\,\langle W\nnh\zeta,\zeta\rangle\,$
over all unit-length anti-self-dual exterior $\,2$-forms 
$\,\zeta\,$ at $\,z$. The second part of (\ref{min}) is in turn  obvious as
trace\-less\-ness of $\,W^-\nnh$ gives 
$\,|\hs W^-\nh|\ge-\mathrm{min}\,\hs\mathrm{spec}\,W^-\nnh\nh$.
\end{proof}
\begin{rem}\label{nodec}Due to Propositions~\ref{kheqv}, and~\ref{crvob}, both 
(\ref{min}) and (\ref{cls}), fail to hold, even locally, 
in self-dual K\"ah\-ler surfaces of negative scalar curvature, 
such as the standard complex hyperbolic plane, some examples constructed in 
\cite{derdzinski-81}, and small deformations of the above metrics,
arising, for instance, when one adds to the K\"ah\-ler form 
$\,i\hskip1pt\partial\overline{\partial}\hh\varphi\,$ for a real-val\-ued 
function $\,\varphi\,$ which is $\,C^4\nh$-close to zero.  
\end{rem}

\section{Proofs of Theorems~\ref{meang} and~\ref{lcstr}}\label{pt}
\setcounter{equation}{0}
Before proving the theorems, we derive some consequences of
(\ref{met}) -- (\ref{jtt}), which will be used in Sect.\,\ref{pd} as well.
Without assuming (\ref{sys}), note that, by (\ref{met}),
\begin{equation}\label{acc}
g_{\ja\ja}\w=e^{(-\nnh1)^\ja\nh\bj}\nh\sec\aj,\quad g_{12}\w=\tan\aj,\quad
g_{pp}\w=e^{(-\nnh1)^p\hn\dj}\nh\sec\cj,\quad g_{34}\w=\tan\cj
\end{equation}
if $\,\ja\in\{1,2\}\,$ and $\,p\in\{3,4\}$. Therefore,
one has (\ref{sys}) if and only if
\begin{equation}\label{bpe}
\mathrm{a)}\hskip6pt
(-\nnh1)^p\nh\bj_p\w=g_{pp}\w\aj_q\w-g_{34}\w\aj_p\w\hh,\qquad
\mathrm{b)}\hskip6pt
(-\nnh1)^\ja\nh\dj_\ja\w=g_{\ja\ja}\w\cj_\jb\w-g_{12}\w\cj_\ja\w
\end{equation}
whenever $\,\{\ja,\jb\}=\{1,2\}\,$ and\/ $\,\{p,q\}=\{3,4\}$.
From (\ref{met}) -- (\ref{jtt}) we also get
\begin{equation}\label{jpq}
J_p^q\nh=\nh(-\nnh1)^q\nh e^{(-\nnh1)^p\hn\dj}\nh\sec\cj,\,\,\,J_p^p\nh
=\nh(-\nnh1)^p\nh\tan\cj\,\mathrm{\ (no\ summing)\ if\ }\{p,q\}\nh=\nh\{3,4\}.
\end{equation}
For $\,g,J\,$ defined by (\ref{met}) -- (\ref{jtt}) and
$\,\omega=g(J\cdot\hs,\,\cdot\,)$, one easily sees that
\begin{equation}\label{omg}
\begin{array}{l}
\omega_{12}\w\nh=\hs\omega_{34}\w\nh=\hs-\omega_{21}\w\nh=\hs-\omega_{43}\w\hn
=1\,\,\mathrm{\ and\ }\,\,\omega_{jk}\w\nh=0\,\,\mathrm{\ other}\hyp\\
\mathrm{wise,\ or,\ equivalently,\ }\,\omega\,=\,dx^1\wedge dx^2\hs
+\,dx^3\wedge dx^4\nh.
\end{array}
\end{equation}
Thus, in terms of the components $\,\vg_{\hskip-2.2ptjk}^l$ of 
the Le\-vi-Ci\-vi\-ta connection $\,\nabla\nh$ of the metric
$\,g$, with $\,k'\nh,l'$ defined by $\,(1'\nh,2'\nh,3'\nh,4')=(2,1,4,3)$, the
components of $\,\nabla\hn\omega\,$ are
\begin{equation}\label{nao}
\omega_{kl'\nnh,\,j}\w\hs
=\,(-\nnh1)^l\vg_{\hskip-2.2ptjk}^l\hs
+\,(-\nnh1)^k\vg_{\hskip-2.2ptjl'}^{\hs k'}\hskip2.7pt.
\end{equation}
On the other hand, due to skew-sym\-me\-try of $\,\omega$, cf.\ (\ref{omg}),
\begin{equation}\label{cpd}
\nabla\hn\omega\,\mathrm{\ is\ completely\ determined\ by\ the\ components\ 
}\,\omega_{kl,\,j}\w\hs\mathrm{\ with\ }\,k<l\hh.
\end{equation}
With the index ranges (\ref{rng}),
$\,\vg_{\hskip-2.2ptj\ja}^\ja\nh=\vg_{\hskip-2.2ptjp}^{\hs p}\nh=0\,$ due
to (\ref{det}-i). Thus,
\begin{equation}\label{oot}
\omega_{12,\,j}\w\hs=\,\omega_{34,\,j}\w\hs=\,0\hh,  
\end{equation}
since 
$\,\omega_{12,\,j}\w\nh=-\vg_{\hskip-2.2ptj\ja}^\ja$ and 
$\,\omega_{34,\,j}\w\nh=-\vg_{\hskip-2.2ptjp}^{\hs p}$ in view of 
(\ref{nao}).

Next, (\ref{nao}),  (\ref{met}), (\ref{det}-ii) and the
Chris\-tof\-fel-sym\-bol formula yield the following
equalities, valid whenever $\,\{\ja,\jb\}=\{1,2\}\,$ and 
$\,\{p,q\}=\{3,4\}$, with repeated indices 
{\it not\/} summed over, and (\ref{olp}-d) arising as a trivial
consequence of (\ref{acc}):
\begin{equation}\label{olp}
\begin{array}{rll}
\mathrm{a)}&\omega_{\ja p,\,\ja}\w\hs
=\,(-\nnh1)^q\vg_{\hskip-2.2pt\ja\ja}^q\hs
+\,(-\nnh1)^\ja\vg_{\hskip-2.2pt\ja p}^{\hs\jb},&
\omega_{\ja p,\,\jb}\w\hs
=\,(-\nnh1)^q\vg_{\hskip-2.2pt12}^q\hs
+\,(-\nnh1)^\ja\vg_{\hskip-2.2pt\jb p}^{\hs\jb},\\
\mathrm{b)}&2\vg_{\hskip-2.2pt\ja\ja}^q\hs
=\,g_{34}\w\partial\nh_p\w\hs g_{\ja\ja}\w\nh
-\hs g_{pp}\w\partial\nh_q\w\hs g_{\ja\ja}\w,&
2\vg_{\hskip-2.2pt\ja p}^{\hs\jb}\hs
=\,g_{\ja\ja}\w\partial\nh_p\w\hs g_{12}\w\nh
-\hs g_{12}\w\partial\nh_p\w\hs g_{\ja\ja}\w,\\
\mathrm{c)}&2\vg_{\hskip-2.2pt12}^q\hs
=\,g_{34}\w\partial\nh_p\w\hs g_{12}\w\nh
-\hs g_{pp}\w\partial\nh_q\w\hs g_{12}\w,&
2\vg_{\hskip-2.2pt\jb p}^{\hs\jb}\hs
=\,g_{\ja\ja}\w\partial\nh_p\w\hs g_{\jb\jb}\w\nh
-\hs g_{12}\w\partial\nh_p\w\hs g_{12}\w,\\
\mathrm{d)}&\partial\nh_p\w\hn\log g_{\ja\ja}\w\nnh\nh
=\nh(-\nnh1)^\ja\bj_p\w\nnh
+\nh g_{12}\w\aj_p\w,&
\partial\nh_p\w\hs g_{12}\w=g_{11}\w g_{22}\w\aj_p\w.
\end{array}
\end{equation}
If $\,\{\ja,\jb\}=\{1,2\}\,$ and\/ $\,\{p,q\}=\{3,4\}$, (\ref{omg})
and (\ref{olp}) give
\begin{equation}\label{tol}
\begin{array}{l}
2(-\nnh1)^\ja\omega_{\ja p,\,\ja}\w\hs/\nh g_{\ja\ja}\w
=[(-\nnh1)^{\ja+p}\nh g_{12}\w\hs+\,g_{34}\w]
\hh[g_{pp}\w\aj_q\w\hs-\,g_{34}\w\aj_p\w\hs-\,(-\nnh1)^p\bj_p\w]\\
\hskip28pt+\,g_{pp}\w\hh[g_{qq}\w\aj_p\w\hn-\nh g_{34}\w\aj_q\w\hn
-\nh(-\nnh1)^q\nh\bj_q\w]\nh
+\nh(g_{11}\w g_{22}\w\hn-\nh g_{12}^2\nh+g_{34}^2\nh
-\nh g_{33}\w g_{44}\w)\aj_p\w,\\
\hskip38pt\mathrm{where,\ due\ to\ (\ref{det}}\hyp\mathrm{i),\ 
}\,(g_{11}\w g_{22}\w\hs-\,g_{12}^2\nh+g_{34}^2\hs
-\,g_{33}\w g_{44}\w)\aj_p\w\hs=\,0,\\
2\hs\omega_{\ja p,\,\jb}\w\hs
=\,(-\nnh1)^pg_{11}\w g_{22}\w\hh[g_{pp}\w\aj_q\w\hs-\,g_{34}\w\aj_p\w\hs
-\,(-\nnh1)^p\bj_p\w].
\end{array}
\end{equation}
\begin{proof}[{\it Proof of Theorem\/~{\rm\ref{meang}}}]The al\-most-K\"ah\-ler claim is
obvious from (\ref{omg}), while (\ref{bpe}), (\ref{cpd}), (\ref{oot}) and
(\ref{tol}) establish the equivalence between (i) and (ii). As
$\,\omega=g(J\cdot\hs,\,\cdot\,)$, (ii) is in turn equivalent to (iii). Next,
(\ref{aci}) shows that (iii) implies (iv), while -- see
\cite[Prop.\,2.29]{besse} -- (iv) and closedness of 
$\,\omega$, cf.\ (\ref{omg}), give (iii).
Finally, the $\,\mathcal{V}\nh$-hol\-o\-mor\-phic\-i\-ty of $\,\aj+i\bj\,$
amounts to 
$\,(d\aj+i\hs d\bj)J=i(d\aj+i\hs d\bj)$, that is,
$\,(d\aj)J=-\hs d\bj\,$ which, by (\ref{jpq}), is nothing else than 
(\ref{sys}-a).

This proves the first assertion in (vi) and, 
combined with Remark~\ref{swtch}, yields the second one as well,
completing the proof.
\end{proof}
\begin{proof}[Proof of Theorem~\ref{lcstr}]The first claim is immediate
from the fact that (iii) follows from (i) in Theorem~\ref{meang}.

Let us now assume (\ref{cls}). The two sub\-bun\-dles are spanned,
locally, by the first two and last two coordinate vector fields for some 
local coordinates $\,y^1\nh,\dots,y^4\nh$.
Consequently, the K\"ah\-ler form
$\,\omega=g(J\cdot\hs,\,\cdot\,)\,$ equals
$\,\phi\,dy^1\wedge dy^2\nh+\,\psi\,dy^3\wedge dy^4$ for some functions
$\,\phi,\psi\,$ and, according to the line following (\ref{epr}), the relation 
$\,d\hs\omega=0$ shows that
$\,\phi\,$ and $\,\psi\,$ are functions of $\,y^1\nnh,y^2$ and 
$\,y^3\nh,y^4\nh$, respectively. Now (\ref{zed}) gives (\ref{omg}) 
with some new coordinates $\,x^1\nh,\dots,x^4\nh$. This yields (\ref{det}-i)
since, by
(\ref{omg}), $\,dx^1\wedge dx^2$ (or, $\,dx^3\wedge dx^4$) restricted to each 
leaf of $\,\mathcal{H}\,$ (or, $\,\mathcal{V}$) in (\ref{cls}) equals the
area form of the restriction of $\,g\,$ to the leaf.
Due to (\ref{det}-i) and $\,g$-or\-thog\-o\-nal\-i\-ty in (\ref{cls}), some 
unique functions $\,\aj,\bj,\cj,\dj\,$ on the coordinate domain 
$\,\,U\,$ satisfy (\ref{met}). As (iii) implies (i) in Theorem~\ref{meang},
the final clause of the theorem follows, which completes the proof.
\end{proof}

\section{Proofs of Theorem~\ref{noltd} and Corollary~\ref{prdct}}\label{pd}
\setcounter{equation}{0}
It is immediate from the Chris\-tof\-fel-sym\-bol formula that
the leaves of $\,\mathcal{H}$ are totally geodesic if and only if
$\,g_{11}\w,g_{12}\w,g_{22}\w$ depend only on 
$\,x^1$ and $\,x^2\nh$.

Now let $\,\mathcal{V}\hs$ in Theorem~\ref{noltd} be hol\-o\-mor\-phic.
We use the index ranges of (\ref{rng}):
\begin{equation}\label{ind}
\ja,\jb,\jc\in\{1,\hn2\},\qquad
p,q\in\{3,\hn4\}.
\end{equation}
The final clause of Theorem~\ref{lcstr} allows us to assume 
(\ref{met}) -- (\ref{sys}) for suitable local coordinates (\ref{rng}) and 
functions $\,\aj,\bj,\cj,\dj$. Thus, by (\ref{jtt}), whenever
$\,\{\ja,\jb\}=\{1,2\}$,
\begin{equation}\label{jll}
J_{\nnh\ja}^\ja=(-\nnh1)^\ja\nh g_{12}\w,\quad
J_{\nnh\jb}^\ja=(-\nnh1)^\ja\nh g_{\jb\jb}\w,\quad
J_{\nnh p}^\jb=J_{\nnh\ja}^p=0\,\mathrm{\ for\ }\,p\in\{3,4\}.
\end{equation}
With the index ranges (\ref{ind}), we then have
\begin{equation}\label{bez}
B_{\nh\ja p}^{\hs\jb}=0\,\mathrm{\ \ for\ all\ }\,\ja,\jb,\p\mathrm{,\ where\
}\,B_{\nh\ja p}^{\hs\jb}
=J_\jc^{\hs\jb}\vg_{\hskip-2.2pt\ja p}^{\hs\jc}
-J_\ja^{\hs\jc}\vg_{\hskip-2.2pt\jc p}^\jb.
\end{equation}
In fact, at each 
point $\,x$, some real-hol\-o\-mor\-phic vector field
$\,w\,$ on a neighborhood of 
$\,x\,$ is tangent to $\,\mathcal{V}\hs$ and has $\,w\nh_x\w\ne0$. Such
$\,w\,$ obviously realize all nonzero vectors tangent to
$\,\mathcal{V}\hs$ at all points.
Hol\-o\-mor\-phic\-i\-ty of $\,w\,$ amounts to vanishing of the $\,(1,1)$
tensor field $\,S=[J,\nabla\nh w]$. However,
$\,S_{\nh\ja}^{\hs\jb}=B_{\nh\ja p}^{\hs\jb}w^p\nh$,
since $\,w\,$ is tangent to $\,\mathcal{V}\hs$ (and so $\,w\hh^\ja\nh=0$), 
which proves (\ref{bez}).

Next, replacing the Chris\-tof\-fel symbols and components of $\,J\,$ with the
expressions provided by (\ref{olp}-b) -- (\ref{olp}-c) and (\ref{jll}),
then combining two out of the four resulting terms with the aid of
(\ref{det}-i), while noting that the other two cancel each other, and,
finally, using (\ref{olp}-d), we see that,
if $\,\ja\in\{1,2\}\,$ and $\,p\in\{3,4\}$, 
with no summation over the repeated index $\,\ja$,
\begin{equation}\label{bpq}
B_{\nh\ja p}^{\hs\ja}
=(-\nnh1)^\ja\nh g_{11}\w g_{22}\w\aj_p\w.
\end{equation}
Hol\-o\-mor\-phic\-i\-ty of $\,\mathcal{V}\hs$ thus implies, via
(\ref{bez}), (\ref{bpq}), (\ref{sys}-a) and (\ref{met}), that
$\,\mathcal{H}\,$ has totally geodesic
leaves or, in other words, $\,g_{11}\w,g_{12}\w,g_{22}\w$ are functions of
$\,x^1$ and $\,x^2\nh$.

Conversely, let $\,g_{11}\w,g_{12}\w,g_{22}\w$ in (\ref{met})
be functions of
$\,x^1\nh,x^2\nh$, so that they define a metric on an oriented surface 
$\,\varSigma\,$ with the
associated complex structure given by
$\,J_{\nnh\ja}^{\hs\ja}=(-\nnh1)^\ja\nh g_{12}\w$ and
$\,J_{\nnh\ja}^{\hs\jb}=(-\nnh1)^\jb\nh g_{\ja\ja}\w$ 
if $\,\{\ja,\jb\}=\{1,2\}$.
Choosing, locally on this surface, a hol\-o\-mor\-phic function
$\,f$ without critical points, and extending it to our four-man\-i\-fold
$\,M\,$ 
with local coordinates $\,x^1\nh,\dots,x^4\nh$, so as to make it independent
of $\,x^3$ and $\,x^4\nh$, we obtain a hol\-o\-mor\-phic function on the 
complex surface $\,M\nh$. Namely, since $\,J_\ja^p\nh=J_p^\ja\nh=0\,$
in (\ref{jtt}), and
$\,\partial\hh_3\w f\nh=\partial\hn_4\w f\nh=0$, the hol\-o\-mor\-phic\-i\-ty
condition $\,(d\hskip-.8ptf)J=i\hs d\hskip-.8ptf\nnh$, assumed on
$\,\varSigma$, 
remains valid on $\,M\nh$. 
Remark~\ref{holsb} now implies that $\,\mathcal{V}$ is hol\-o\-mor\-phic.
This completes the proof of Theorem~\ref{noltd}.

Corollary~\ref{prdct} is now obvious from the first paragraph of this section.
\begin{rem}\label{smple}To construct examples illustrating 
Theorem~\ref{noltd} it is sufficient -- as well as locally necessary, 
due to Theorem~\ref{lcstr} and Theorem~\ref{meang}(v) -- to
proceed as follows. One
chooses functions $\,\aj,\bj,\cj,\dj\,$ of the local coordinates (\ref{rng}),
so as to satisfy two requirements. First, $\,\aj,\bj\,$ should 
depend only
on $\,(x^1\nnh,x^2)$, and so the initial parts of (\ref{met}) -- 
(\ref{jtt}) define both a metric on an oriented surface $\,\varSigma\,$
with the local coordinates $\,(x^1\nnh,x^2)$, and the associated complex
structure. Secondly, $\,\cj+i\dj$ should form a family,
pa\-ram\-e\-triz\-ed by $\,(x^3\nnh,x^4)$, of hol\-o\-mor\-phic functions on
$\,\varSigma$.
\end{rem}

\renewcommand{\thesection}{\Roman{section}}
\section*{Appendix I: Car\-tan's test}
\setcounter{section}{1}
\setcounter{equation}{0}
\setcounter{thm}{0}
The terminology used in the appendices follows
\cite{bryant-chern-gardner-goldschmidt-griffiths}, and so does the notation,
which makes the conventions adopted here inconsistent, at times, with those
of the preceding sections. The resulting inconvenience is, in our view, minor
compared to the benefit to the reader who wishes to look up our references to
\cite{bryant-chern-gardner-goldschmidt-griffiths}, for further details, and as
a result is able to do that without the need of a ``vocabulary.''

An {\it exterior differential system\/} on a manifold $\,M\,$ is an 
ideal $\,\mathcal{I}\hs$ in the graded algebra $\,\varOmega\hh^*\nnh\nh M\,$ 
of smooth differential forms on $\,M\nh$, closed 
under exterior differentiation. The {\it integral elements\/} (or, {\it 
integral manifolds\/}) of $\,\mathcal{I}\hs$ are the sub\-spaces of
tangent spaces of $\,M\,$ (or, the sub\-man\-i\-folds of $\,M$) on which
all forms in $\,\mathcal{I}\hs$ vanish 
\cite[pp.\,16,\,65]{bryant-chern-gardner-goldschmidt-griffiths}. 
If $\,E\subseteq\tzm\,$ is a $\,p\hs$-di\-men\-sion\-al integral element of
$\,\mathcal{I}\nh$, one sets 
\begin{equation}\label{hee}
H\nh(E)\,=\,\{v\in\tzm:\zeta(v,e_1\w,\dots,e_p\w)\,=\,0\,\mathrm{\ for\ all\
}\,\zeta\in\mathcal{I}\cap\varOmega\hs^{p\hs+1}\nnh M\}\hh.
\end{equation}
See \cite[pp.\,67-68]{bryant-chern-gardner-goldschmidt-griffiths}. The
definition uses a basis $\,e_1\w,\dots,e_p\w$ of $\,E\nh$, but does not
depend on it, since $\,H\nh(E)\,$ is clearly the vector sub\-space of
$\,\tzm\,$ given by 
\begin{equation}\label{hsp}
H\nh(E)\,=\,\{v\in\tzm\nnh:\mathrm{span}(v,E)\,\mathrm{\ is\ an\
integral\ element\ of\ }\,\mathcal{I}\}\hh.
\end{equation}
The set $\,V\hskip-4pt_n\w(\mathcal{I})\,$ of all $\,n$-di\-men\-sion\-al
integral elements of $\,\mathcal{I}\hs$ is contained in the total space of
the Grass\-mann\-i\-an bundle $\,G\nh_n\w(T\nh M)$. Slightly paraphrasing 
a part of
\cite[p.\,74,\,Theorem 1.11]{bryant-chern-gardner-goldschmidt-griffiths}, 
we may state it as follows.

\ \ 

\noindent{\bf Car\-tan's test.\ }{\it Let an exterior differential system\/
$\,\mathcal{I}\hs$ on a manifold $\,M\,$ contain no nonzero forms of
degree\/ $\,0$. If\/ 
$\,\{0\}_z\w\nh=E\nh_0\w\subseteq E\nh_1\w\subseteq E\nh_2\w\subseteq\ldots
\subseteq E\nh_n\w\subseteq\tzm\,$ is  a flag of integral elements of\/
$\,\mathcal{I}\nh$, each $\,E\nnh_k\w$ has the dimension\/ $\,k\,$ and\/
$\,H\nh(E\nnh_k\w)\,$ the co\-dimen\-sion $\,c_k\w$ in\/ 
$\,\tzm\nh$, while
for some neighborhood\/ $\,U\hn$ of\/ $\,E\nh_n\w$ in\/ $\,G\nh_n\w(T\nh M)\,$
the intersection\/ $\,V\hskip-4pt_n\w(\mathcal{I})\cap U\hs$ 
is a smooth manifold of co\-dimen\-sion $\,c_0\w\nnh+\ldots+c_{n-1}\w$ in\/
$\,U\nnh$, then\/ $\,E\nh_n\w$ is ordinary in the sense of\/ 
{\rm\cite[p.\,73,\,Defn.\,1.9]{bryant-chern-gardner-goldschmidt-griffiths}}
and, consequently, according to\/ 
{\rm\cite[p.\,76,\,Cor\nh.\,2.3]{bryant-chern-gardner-goldschmidt-griffiths}},
when\/ $\,\mathcal{I}\hs$ is also assumed real-an\-a\-lyt\-ic, 
there exists an integral manifold of\/ $\,\mathcal{I}\hs$ passing through\/
$\,z\,$ and having at\/ $\,z\,$ the tangent space\/} $\,E\nh_n\w$.
\begin{rem}\label{horiz}In our application of Car\-tan's test (see the
next appendix) $\,M\,$ will be a bundle over some base manifold $\,X\,$
of dimension $\,n$. The {\it vertical distribution\/} on $\,M\,$ is then
tangent to the fibres (that is, levels of the bundle projection
$\,\pi:M\hn\to X$), and we will only be interested in integral elements or 
integral  manifolds of $\,\mathcal{I}\hs$ which are {\it horizontal\/} in the
sense of having a trivial intersection with the vertical space at
the point in question (or, respectively, of this being the case for their 
tangent spaces at all points). Horizontality is a special case of what 
\cite[p.\,103]{bryant-chern-gardner-goldschmidt-griffiths} 
calls an {\it independence condition}.

Horizontal integral manifolds of $\,\mathcal{I}\hs$ are, locally, the same as
graphs of sections $\,X\nh\to M\,$ of the bundle $\,M\,$ over $\,X$, and so
their existence amounts to local solvability of a specific system of partial 
differential equations.
\end{rem}

\section*{Appendix II: Solvability of the system \hs{\rm(\ref{sys})}}
\setcounter{section}{2}
\setcounter{equation}{0}
\setcounter{thm}{0}
To derive Proposition~\ref{slvbl} from Car\-tan's test, we let 
$\,M\,$ be the set of all
\begin{equation}\label{vgt}
(x,y,u,v,\aj,\bj,\cj,\dj,A,B,C,D,E,F\nh,G,H,K,L,P,Q)
\end{equation}
in $\,\bbR\hn^{20}$ having $\,\aj,\cj\in(-\pi/2,\pi/2)$, and treat
$\,M\,$ as a bundle over 
$\,\bbR\hn^4$ with the projection sending (\ref{vgt}) to $\,(x,y,u,v)$.
On $\,M\,$ one has the exterior differential system $\,\mathcal{I}\hs$ 
generated by the following four $\,1$-forms and four $\,2$-forms:
\begin{equation}\label{fof}
\begin{array}{l}
d\aj-A\,dx-B\,dy-C\,du-D\,dv,\\
d\bj-K\,dx-L\,dy+(De^{-\dj}\nh\sec\cj-C\tan\cj)\,du\\
\hskip100pt+\,(D\tan\cj-Ce^\dj\nh\sec\cj)\,dv,\\
d\cj-E\,dx-F\hs dy-G\,du-H\,dv,\\
d\dj+(Fe^{-\bj}\nh\sec\aj-E\tan\aj)\,dx-P\,du-Q\,dv\\
\hskip100pt+\,(F\tan\aj-Ee^\bj\nh\sec\aj)\,dy,\\
dA\wedge dx+dB\wedge dy+dC\wedge du+dD\wedge dv,\\
dK\wedge dx+dL\wedge  dy-(e^{-\dj}\nh\sec\cj\,dD-\tan\cj\,dC)\wedge du\\
\hskip100pt-\,(\tan\cj\,dD-e^\dj\nh\sec\cj\,dC)\wedge dv\,+\,\ldots,\\
dE\wedge dx+dF\wedge dy+dG\wedge du+dH\wedge dv,\\
dP\wedge du+dQ\wedge dv+(\tan\aj\,dE-e^{-\bj}\nh\sec\aj\,dF)\wedge dx\\
\hskip100pt-\,(\tan\aj\,dF-e^\bj\nh\sec\aj\,dE)\wedge dy\,+\,\ldots.
\end{array}
\end{equation}
Here $\,\ldots\,$ stand for two specific functional combinations of the six
exterior products $\,dx\wedge dy,\,dx\wedge du,\,dx\wedge dv,\,dy\wedge du,
\,dy\wedge dv,\,du\wedge dv$, and the
$\,2$-forms in (\ref{fof}) are modified versions of
the opposites of the exterior derivatives of the $\,1$-forms. 
The modifications are based on a simple rule: 
any functional 
multiple of $\,d\aj,d\bj,d\cj$ or $\,d\dj\,$ occurring in an exterior
derivative is replaced by what it would become if the corresponding $\,1$-form
in (\ref{fof}) were declared equal to zero. The ideal $\,\mathcal{I}\hs$ thus
coincides with the one generated by the four $\,1$-forms in (\ref{fof}) and
their exterior derivatives, as each of the above replacements consists 
in adding to a generator of the latter ideal
a functional multiple of another generator.

It is clear that, locally, four-di\-men\-sion\-al integral manifolds of
$\,\mathcal{I}\hs$ which are horizontal in the sense of Remark~\ref{horiz}
are nothing else than graphs of solutions $\,(\aj,\bj,\cj,\dj)\,$ to
(\ref{sys}), with $\,(x^1\nh,x^2\nh,x^3\nh,x^4)=(x,y,u,v)$.

We will apply Car\-tan's test, phrased as in Appendix I,
by selecting, at a fixed point $\,z\in M\,$ with the projection
$\,\mathbf{p}=(x,y,u,v)\in\rfo\nnh$,
a horizontal integral flag $\,E\nnh_k\w$, $\,k=0,1,2,3,4$,
cf.\ Remark~\ref{horiz}, with each $\,E\nnh_k\w$ of dimension $\,k\,$ and 
$\,H\nh(E\nnh_k\w)$ having the co\-dimen\-sions
\begin{equation}\label{czf}
(c_0\w,c_1\w,c_2\w,c_3\w)\,=\,(4,\hs8,\hs12,\hs16)\mathrm{,\ with\
}\,c_0\w\nnh+c_1\w\nnh+c_2\w\nnh+c_3\w\nh=\,40.
\end{equation}
The simultaneous kernel of the four linearly independent $\,1$-forms in 
(\ref{fof}) is a co\-dimen\-sion-four distribution $\,\mathcal{D}$ on
$\,M\nh$. Clearly, $\,E\nh_0\w=\{0\}$, and 
$\,H\nh(E\nh_0\w)=\mathcal{D}\nh_z\w$ has the required co\-dimen\-sion 
$\,c_0\w=4$, while $\,E\nnh_k\w$ must all be contained in
$\,\mathcal{D}\nh_z\w$. Using the dotted
versions of the coordinates (\ref{vgt}) to represent vectors tangent to
$\,M\,$ at a point (\ref{vgt}), we will from now on restrict our
consideration to vectors having
\begin{equation}\label{rqt}
\begin{array}{l}
\dot\aj=A\dot x+B\dot y+C\dot u+D\dot v,\\
\dot\bj=K\dot x+L\dot y-(De^{-\dj}\nh\sec\cj-C\tan\cj)\dot u
-(D\tan\cj-Ce^\dj\nh\sec\cj)\dot v,\\
\dot\cj=E\dot x+F\dot y+G\dot u+H\dot v,\\
\dot\dj=P\dot u+Q\dot v-(Fe^{-\bj}\nh\sec\aj-E\tan\aj)\dot x
-(F\tan\aj-Ee^\bj\nh\sec\aj)\dot y,
\end{array}
\end{equation}
which amounts to requiring that they lie within the distribution
$\,\mathcal{D}$.
With any given $\,i,j\in\{1,2,3,4\}$, for a pair 
$\,(\dot x_i\w,\dot y_i\w,\dots,\dot P\nh\nnh_i\w,\dot Q_i\w),\,\, 
(\dot x\nh_j\w,\dot y\hn_j\w,\dots,\dot P\nnh\nnh_j\w,\dot Q_j\w)\,$ of vectors
with (\ref{rqt}), being annihilated by the $\,2$-forms in (\ref{fof}) means
precisely that
\begin{equation}\label{prc}
\begin{array}{l}
\dot x\hn_j\w \dot A_i\w-\dot x_i\w\dot A_j\w
+\dot y_j\w\dot B_i\w-\dot y_i\w\dot B_j\w
+\dot u_j\w \dot C_i\w-\dot u_i\w\dot C\nnh_j\w
+\dot v_j\w\dot D\nh_i\w-\dot v_i\w\dot D\nh_j\w\,=\,0,\\
\dot x\nh_j\w\dot K_i\w\nnh-\nh\dot x_i\w\dot K\nh_j\w\nnh
+\nh\dot y\nh_j\w\dot L_i\w-\dot y_i\w\dot L_j\w\nnh
\nh+(\tan\cj)(\dot u_j\w\dot C\hn_i\w\nnh-\nh\dot u_i\w\dot C\nh_j\w)\nh
-\nh(\tan\cj)(\dot v\nh_j\w\dot D\nh_i\w\nnh-\nh\dot v_i\w\dot D\nnh_j\w)
\hskip-10pt\\
\hskip40pt+\,e^\dj\nh(\sec\cj)(\dot v_j\w\dot C_i\w-\dot v_i\w\dot C\nnh_j\w)
-e^{-\dj}\nh(\sec\cj)(\dot u_j\w\dot D\nh_i\w
-\dot u_i\w\dot D\nh_j\w)\,
=\,\ldots\hs,\\
\dot x\hn_j\w\dot E\hn_i\w-\dot x_i\w\dot E\nh_j\w
+\dot y_j\w\dot F\nnh\nh_i\w-\dot y_i\w\dot F\nnh\nnh_j\w
+\dot u_j\w \dot G_i\w-\dot u_i\w\dot G_j\w
+\dot v_j\w\dot H_i\w-\dot v_i\w\dot H_j\w\,=\,0,\\
\dot u\hn_j\w\dot P\nnh\hn_i\w\nnh-\nh\dot u_i\w\dot P\nnh\nh_j\w\nnh
+\nh\dot v\nh_j\w\dot Q_i\w\nnh-\nh\dot v_i\w\dot Q\hn_j\w\nnh
+(\tan\aj)(\dot x\nh_j\w\dot E\hn_i\w\nnh-\nh\dot x_i\w\dot E\nh_j\w)\nh
-\nh(\tan\aj)(\dot y_j\w\dot F\nnh\nh_i\w\nnh
-\nh\dot y_i\w\dot F\nnh\nnh_j\w)\hskip-10pt\\
\hskip40pt+\,e^\bj\nh(\sec\aj)(\dot y_j\w\dot E\hn_i\w
-\dot y_i\w\dot E\nh_j\w)
-e^{-\bj}\nh(\sec\aj)(\dot x_j\w\dot F\nnh\nh_i\w
-\dot x_i\w\dot F\nnh\nnh_j\w)\,=\,\ldots\hs,
\end{array}
\end{equation}
where $\,\ldots\,$ now denotes two specific functional combinations of 
the six expressions $\,\dot x_i\w\dot y_j\w-\dot x\hn_j\w\dot y_i\w,\dots,
\dot u_i\w\dot v_j\w-\dot u_j\w\dot v_i\w$. 
On a purely formal level, (\ref{prc}) is the result of replacing, in (\ref{fof}), 
$\,dA\wedge dx\,$ with
$\,\dot A_i\w\dot x\hn_j\w-\dot A_j\w\dot x_i\w$ (and analogously for the
remaining eleven capital letters $\,B,C,\dots,P,Q\,$ and three low\-er-case
letters $\,y,u,v$), with a similar replacement applied to the right-hand sides.
For fixed $\,\dot x_i\w,\dot x\hn_j\w,\dot y_i\w,\dot y_j\w\dots,
\dot u_i\w,\dot u_j\w,\dot v_i\w,\dot v_j\w$, with all choices of $\,i,j$,
(\ref{prc}) are linear homogeneous in the forty-eight
dot\-ted-and-sub\-script\-ed versions of our twelve capital letters.

To determine the co\-dimen\-sions $\,c_k\w$ of $\,H\nh(E\nnh_k\w)$, for
suitably selected horizontal integral elements $\,E\nnh_k\w$ of
dimensions $\,k=1,2,3$, 
we use a unified approach: fixing a basis of $\,E\nnh_k\w$, assumed to have
some ``generic'' property, and finding out
how many independent conditions are involved in extending this basis by
one vector, so that the span will still be an integral element. The basis always
consists of vectors
$\,(\dot x\nh_j\w,\dot y\hn_j\w,\dots,\dot P\nnh\nnh_j\w,\dot Q_j\w)$ 
with (\ref{rqt}), $\,1\le j\le k$, with 
the projections 
$\,(\dot x\nh_j\w,\dot y\hn_j\w,\dot u_j\w,\dot v_j\w)$ linearly independent
in $\,\rfo$ (due to horizontality), while equations 
(\ref{prc}) hold for all $\,i,j\,$ with
$\,1\le i<j\le k$, since $\,E\nnh_k\w$ is an integral element. 
A new vector $\,(\dot x_i\w,\dot y_i\w,\dots,\dot P\nh\nnh_i\w,\dot Q_i\w)\,$ 
added to this basis so as to span an integral element is thus subject
to precisely $\,4k$ equations (\ref{prc}), where $\,j\,$ ranges from
$\,1\,$ to $\,k\,$ and $\,i>k$ is fixed. 
Using the notation
$\,\dot{\mathbf{x}}\hn_j\w=(\dot x\nh_j\w,\dot y\hn_j\w)\,$ and 
$\,\dot{\mathbf{u}}\hn_j\w=(\dot u_j\w,\dot v_j\w)$, we now represent
each group of four out of the $\,4k$ linear homogeneous equations, with a
single value of $\,j$ and the sixteen unknowns 
$\,\dot A_i\w,\dot B_i\w,\dot C_i\w,\dot D_i\w,\dot E\hn_i\w,\dot F\nnh\nh_i\w,
\dot G\hn_i\w,\dot H\nh_i\w,\dot K_i\w,\dot L_i\w,\dot P\nnh\nh_i\w,\dot Q_i\w,
\dot x_i\w,\dot y_i\w,\dot u_i\w,\dot v_i\w$, by the $\,4\times\nnh16\,$ matrix
\begin{equation}\label{rpm}
\left[\begin{matrix}
\dot{\mathbf{x}}\hn_j\w&\dot{\mathbf{u}}\hn_j\w&\mathbf0&\mathbf0&\mathbf0&\mathbf0
&*&*\cr
\mathbf0&\Phi\dot{\mathbf{u}}\hn_j\w&\mathbf0&\mathbf0&\dot{\mathbf{x}}\hn_j\w
&\mathbf0&*&*\cr
\mathbf0&\mathbf0&\dot{\mathbf{x}}\hn_j\w&\dot{\mathbf{u}}\hn_j\w&\mathbf0&\mathbf0
&*&*\cr
\mathbf0&\mathbf0&\Psi\dot{\mathbf{x}}\hn_j\w&\mathbf0&\mathbf0
&\dot{\mathbf{u}}\hn_j&*&*\end{matrix}\right]\nnh\nnh,
\end{equation}
where every boldface vector entry stands for a row-like 
pair of scalar entries, including 
$\,\mathbf0=(0,0)$, while
the linear en\-do\-mor\-phisms $\,\Phi,\Psi\,$ of
$\,\rto$ are given by
\begin{equation}\label{fps}
\begin{array}{l}
\Phi(u,v)\,=\,(u\tan\cj\,+\,v\hs e^\dj\nh\sec\cj,\,-ue^{-\dj}\nh\sec\cj\,
-\,v\tan\cj)\,\mathrm{\hs\ \ and}\\
\Psi(x,y)\,=\,(x\tan\aj\,+\,y\hs e^\bj\nh\sec\aj,\,-xe^{-\bj}\nh\sec\aj\,
-\,y\tan\aj)\mathrm{,\nh\ \ so}\\
\mathrm{that\ }\hn\Phi\nh,\nnh\Psi\nh\mathrm{\ are\ linear\ auto\-mor\-phisms\
with\ no\ real\ eigen\-values.}
\end{array}
\end{equation}
In fact, $\,\Phi\,$ and $\,\Psi\,$ are complex structures, cf.\ 
(\ref{met}) -- (\ref{jtt}). The eight asterisks in (\ref{rpm}) are row-like 
pairs of entries, which
we choose not to identify, as they have no bearing on our discussion.

Stacking $\,k$, that is, one, or two, or three, matrices (\ref{rpm}) on top of 
one another, for $\,k\,$ different values of $\,j$, we obtain 
a $\,4k\times\nh16\,$ matrix, which,
\begin{equation}\label{gnc}
\mathrm{with\ a\ generic\ choice\  of\ }\,E\nnh_k\w\mathrm{,\ will\ have\
linearly\ independent\ rows.}
\end{equation}
Here are the details. 
For $\,k=1\,$ our genericity requirement reads
$\,\dot{\mathbf{x}}\hn_j\w\ne\mathbf0\ne\dot{\mathbf{u}}\hn_j\w$. In other
words, we want the line $\,d\pi\nh_z\w(E\nh_1\w)\subseteq\rfo$ not
to be contained in either of two specific coordinate planes. If a linear
combination of the rows of (\ref{rpm}) vanishes, the coefficient of the first
(or second, or third, or fourth) row equals zero, as shown by the first,
fifth, fourth, and sixth of the eight ``columns'' of (\ref{rpm}).

If $\,k=2$, we impose the following genericity condition: both
pairs $\,\dot{\mathbf{x}}\hn_1\w,\dot{\mathbf{x}}\hn_2\w$ and
$\,\dot{\mathbf{u}}\hn_1\w,\dot{\mathbf{u}}\hn_2\w$ should be
linearly independent in $\,\rto\nh$. Equivalently, the plane
$\,d\pi\nh_z\w(E\nh_2\w)\,$ in $\,\rfo$ projects sur\-ject\-i\-ve\-ly 
onto two specific coordinate planes (that is, intersects both of them
trivially). Placing the matrix (\ref{rpm}) with $\,j=1\,$ on top of its
version for $\,j=2$, we obtain a matrix with eight linearly independent rows. 
In fact, as in the preceding paragraph, a vanishing linear combination of the
rows must have zero coefficients of the rows $\,1\,$ and $\,5$, or $\,2\,$ and
$\,6$, or $\,3\,$ and $\,7$, or $\,4\,$ and $\,8$, as one sees looking, again, 
at the first, fifth, fourth, and sixth ``columns.''

Finally, let $\,k=3$. This time we require both triples
$\,\dot{\mathbf{x}}\hn_1\w,\dot{\mathbf{x}}\hn_2\w,\dot{\mathbf{x}}\hn_3\w$ and
$\,\dot{\mathbf{u}}\hn_1\w,\dot{\mathbf{u}}\hn_2\w,\dot{\mathbf{u}}\hn_3\w$ to be
in general position, in the sense that any two vectors from the triple
are linearly independent in $\,\rto\nh$. Triples $\,(q_1\w,q_2\w,q_3\w)\in\rtr$
with $\,q_1\w\dot{\mathbf{x}}\hn_1\w+q_2\w\dot{\mathbf{x}}\hn_2\w
+q_3\w\dot{\mathbf{x}}\hn_3\w=\mathbf0\,$ thus form {\it a one-di\-men\-sion\-al 
sub\-space\/} of $\,\rtr\nh$, and similarly for the vectors
$\,\dot{\mathbf{u}}\hn_j\w$. A further ge\-ner\-ic-type assumption needed here is
that this
\begin{equation}\label{ods}
\begin{array}{l}
\mathrm{one}\hyp\mathrm{di\-men\-sion\-al\ sub\-space\ of\
}\hs\rtr\nnh\mathrm{\ associated\ with}\\
\dot{\mathbf{u}}\hn_1\w,\dot{\mathbf{u}}\hn_2\w,\dot{\mathbf{u}}\hn_3\w\mathrm{\hs\
is\ different\ from\ the\ one\ for\
}\,\dot{\mathbf{x}}\hn_1\w,\dot{\mathbf{x}}\hn_2\w,\dot{\mathbf{x}}\hn_3\w.
\end{array}
\end{equation}
Stacking together, as before, three copies of
(\ref{rpm}), with different values of $\,j$, suppose now that we have a
vanishing linear combination of the resulting twelve rows, with some coefficients 
$\,q_1\w,r_1\w,s_1\w,t_1\w,q_2\w,r_2\w,s_2\w,t_2\w,q_3\w,r_3\w,s_3\w,t_3\w$.
Looking at the first, fifth, fourth and sixth ``columns'' we see, writing
$\,\mathbf{q}\,$ for $\,(q_1\w,q_2\w,q_3\w)$, etc., 
that both pairs $\,\mathbf{q},\mathbf{r}\,$ and $\,\mathbf{s},\mathbf{t}\,$ are
linearly dependent in $\,\rtr\nh$.

Next, $\,\mathbf{q}=\mathbf0\,$ both when 
$\,\mathbf{r}=\mathbf0\,$ and when $\,\mathbf{r}\,$ is nonzero:
we use the first two ``columns'' and -- in the former case -- invoke (\ref{ods})
while, in the latter, write $\,\mathbf{q}=\ly\mathbf{r}\,$ with a scalar $\,\ly$,
so that the coefficients $\,r_1\w,r_2\w,r_3\w$ produce vanishing linear 
combinations of the vectors 
$\,\ly\dot{\mathbf{x}}\hn_j\w$, as well as of
$\,\Phi\dot{\mathbf{u}}\hn_j\w+\ly\dot{\mathbf{u}}\hn_j\w$ (and hence,
due to the last line of (\ref{fps}), also of $\,\dot{\mathbf{u}}\hn_j\w$). Then 
(\ref{ods}) gives $\,\ly=0$.

Very similarly, $\,\mathbf{s}=\mathbf0$, whether or not $\,\mathbf{t}=\mathbf0$,
as one sees using the third and fourth ``columns'' instead of the first two. 
With $\,\mathbf{q}=\mathbf{s}=\mathbf0$, the second and fifth (or, third
and sixth) ``columns'' show that, with the coefficients $\,r_1\w,r_2\w,r_3\w$ 
(or, $\,t_1\w,t_2\w,t_3\w$), the resulting linear combinations of the vectors
$\,\Phi\dot{\mathbf{u}}\hn_j\w$ and $\,\dot{\mathbf{x}}\hn_j\w$ (or,
$\,\Psi\dot{\mathbf{x}}\hn_j\w$ and $\,\dot{\mathbf{u}}\hn_j\w$) are both
equal to zero. Now (\ref{ods}), combined with the last line of (\ref{fps}),
implies that $\,\mathbf{r}=\mathbf{t}=\mathbf0$, proving our claim
(\ref{gnc}) about linear independence of the rows.

To choose $\,E\nnh_k\w$ of dimensions $\,k=1,2,3$, each 
of them generic and contained in the next one, we fix $\,E\hn_3\w$ with a
generic basis as described above, and declare $\,E\nnh_k\w$, for $\,k=1,2$, to
be the span of the first $\,k\,$ vectors of this basis, which obviously makes
them generic. For each $\,k\in\{1,2,3\}$, (\ref{gnc}) guarantees 
that the solution space, forming the kernel of a rank $\,4k\,$ operator
$\,\bbR\hn^{16}\nh\to\bbR^{4k}\nnh$,\hs\ is of dimension $\,16-4k$, and hence of
the co\-dimen\-sion $\,c_k\w=4(k+1)\,$ in $\,\bbR\hn^{20}\nh$, 
as required in (\ref{czf}).

As a final step we will now show that, within the af\-fine space of all 
four-di\-men\-sion\-al horizontal sub\-spaces of $\,\bbR\hn^{20}\nnh$, cf.\ 
Remark~\ref{graff}, the integral elements $\,E\nnh_4\w$ of $\,\mathcal{I}$ at
$\,z\,$ form a co\-dimen\-sion-forty af\-fine sub\-space. Due to
(\ref{czf}), this provides the correct co\-dimen\-sion of 
$\,V\hskip-4pt_4\w(\mathcal{I})\cap U\hs$ in Car\-tan's test (Appendix I), with
the manifold structure of an af\-fine bundle (the fibres having the same 
dimension $\,24\,$ at all points).

Every horizontal four-di\-men\-sion\-al sub\-space $\,E\nnh_4\w$ of
$\,\mathcal{D}\nh_z\w\subseteq\bbR\hn^{20}$ has a unique basis of 
vectors $\,(\dot x\nh_j\w,\dot y\hn_j\w,\dots,\dot P\nnh\nnh_j\w,\dot Q_j\w)$ 
with (\ref{rqt}), $\,1\le j\le 4$, such that the projections 
$\,(\dot x\nh_j\w,\dot y\hn_j\w,\dot u_j\w,\dot v_j\w)\,$ form the 
standard basis of $\,\rfo\nh$. Requiring $\,E\nnh_4\w$ to be an  integral element
of $\,\mathcal{I}\hs$ amounts to imposing,
on the forty-eight unknowns
\[
\dot A_j\w,\dot B_j\w,\dot C\nnh_j\w,\dot D\hn_j\w,\dot E\nh_j\w,\dot F\nnh\nnh_j\w,
\dot G\nh_j\w,\dot H\nnh_j\w,\dot K_j\w,\dot L_j\w,\dot P\nnh\nnh_j\w,\dot Q_j\w,
\quad j=1,2,3,4,
\]
the system of twen\-ty-four (generally nonhomogeneous) linear
equations (\ref{prc}), for the six pairs $\,(i,j)\,$ with $\,1\le i<j\le 4$.

{\it The\/ $\,24\times\nh48\,$ matrix of this system is of rank twen\-ty-four}.
Namely, replacing all the right-hand sides by zero, we obtain a system stating
that the four $\,2$-forms in (\ref{fof}) (with the dots $\,\ldots\,$ treated as
zero) vanish on $\,E\nnh_4\w$. According to 
Car\-tan's lemma (\ref{crt}) this means precisely two
things.

First, the matrix with the four rows
$\,(\dot A_j\w,\dot B_j\w,\dot C\nnh_j\w,\dot D\hn_j\w)$, $\,j=1,2,3,4$, is
symmetric, and so is the one with the rows 
$\,(\dot E\nh_j\w,\dot F\nnh\nnh_j\w,\dot G\nh_j\w,\dot H\nnh_j\w)$,
resulting in $\,10+10=20$ parameters, for now free (but subjected to two
constraints below).

Secondly, two more symmetric $\,4\times\nh4\,$ matrices similarly
arise from the second and, respectively, fourth 
$\,2$-forms in (\ref{fof}). However the last two (or, respectively, first two)
of their columns are already expressed in terms of the two symmetric matrices
mentioned earlier. The two new symmetry conditions thus amount to introducing
six new free parameters: $\,\dot K\hn_1\w,\dot L_2\w,\dot K\hn_2\w=\dot L_1\w,
\dot P\nnh\nh_3\w,\dot Q\hn_4\w,\dot P\nnh\nnh_4\w=\dot Q_3\w$, while also subjecting the previous
ones to two constraints:
\[
(e^\bj\nh\dot E\nh_1\w+e^{-\bj}\nnh\dot F\nnh\nnh_2\w)\sec\aj
-2\dot E\nh_2\w\tan\aj\,
=\,(e^\dj\hn\dot C\nh_3\w+e^{-\dj}\nnh\dot D\nh_4\w)\sec\cj
-2\hh \dot C\nh_4\w\tan\cj\,=\,0,
\]
leading, in the end, to $\,20+6-2=24\,$ free parameters.

Since Car\-tan's lemma (\ref{crt})
has the form of an equivalence, the solution space of the
corresponding homogeneous system is of dimension twen\-ty-four, which proves
the italicized claim made at the beginning of one of the earlier paragraphs.

The horizontal integral elements $\,E\nnh_4\w$ thus form an af\-fine space of
dimension twen\-ty-four, and hence of the correct co\-dimen\-sion forty 
required by Car\-tan's test.

\ \

\ \

\noindent{\bf Funding.}  
This work was supported by the National Research Foundation of Korea (NRF)
grant funded by the Korea government (MSIT) (RS-2024-00334956).

\end{document}